\newcommand{\MATRIXGP}{{\Gamma}}
\newcommand{\AUTOGROUP}{{\mathcal G}}
\newcommand{\tr}{{\rm tr}}
\newcommand{\Aut}{{\rm Aut}}
\newcommand{\Inn}{{\rm Inn}}
\newcommand{\Out}{{\rm Out}}
\newcommand{\F}{{\rm F}}
\newcommand{\SL}{{\rm SL}}
\newcommand{\C}{\mathbb{C}}
\newcommand{\Z}{\mathbb{Z}}
\newcommand{\discrete}{\mathcal{D}}
\newcommand{\nondiscrete}{\mathcal{N}}
\newcommand{\parameterneighborhood}{\mathcal{P}}
\newcommand{\ourboundary}{\mathscr{B}}
\newtheorem{theorem}{\bf Theorem}[section]
\newtheorem{proposition}[theorem]{\bf Proposition}
\newtheorem*{theorem*}{\bf Theorem}
\newtheorem*{theoremA}{\bf Theorem A}
\newtheorem{remark}[theorem]{\bf Remark}
\newtheorem{question}{\bf Question}
\begin{document}

\title[Questions about the dynamics on a natural family of affine cubic
surfaces]{Questions about the dynamics on a \\  natural family of affine cubic
surfaces}

\author{Julio Rebelo and Roland Roeder}
\date{\today}

\thanks{}


\maketitle

\begin{abstract}
We present several questions about the dynamics of
the group of holomorphic automorphisms of the affine
cubic surfaces
\begin{align*}
S_{A,B,C,D} = \{(x,y,z) \in \C^3 \, : \, x^2 + y^2 + z^2 +xyz = Ax + By+Cz+D\},
\end{align*}
where $A,B,C,$ and $D$ are complex parameters.  
This group action describes the monodromy of the famous Painlev\'e 6 Equation as well as the natural dynamics
of the mapping class group on the $\SL(2,\C)$ character varieties associated to the once punctured torus and the four times punctured sphere.

The questions presented here arose while preparing our work ``Dynamics of groups of automorphisms of character varieties and Fatou/Julia decomposition for Painlev\'e~6'' \cite{RR} and during informal discussions with many people.
Several of the questions were posed at the Simons Symposium on Algebraic, Complex and Arithmetic Dynamics
that was held at Schloss Elmau, Germany, in August 2022 as well as the MINT Summer School ``Facets of Complex Dynamics'' that was held in Toulouse, France, in June 2023.
\end{abstract}

\section{Introduction}
This paper is devoted to presenting
several questions about the dynamics of the group of
automorphism of the complex affine surface 
\begin{align}\label{EQN:SURF}
S_{A,B,C,D} = \{(x,y,z) \in \C^3 \, : \, x^2 + y^2 + z^2 +xyz = Ax + By+Cz+D\},
\end{align}
where $A,B,C,$ and $D$ are complex parameters.
In order
to present them in the clearest possible way, we will first introduce some of
the most relevant problems for specific choices of parameter that are of
independent interest, before considering the full complex four-dimensional
family $S_{A,B,C,D}$.   A more ``top down'' approach and considerably greater
detail is given in \cite{RR} as well as in \cite{cantat-2,cantat-1}.

\vspace{0.1in}
{\bf Some ``preferred'' parameter values:}
\begin{itemize}
\item[(i)] {\em Picard Parameter}: $(A,B,C,D) = (0,0,0,4)$,
\item[(ii)] {\em Markoff Parameter}: $(A,B,C,D) = (0,0,0,0)$, and the
\item[(iii)] {\em Punctured Torus Parameters}: $(A,B,C,D) = (0,0,0,D)$ for any $D \in \C$.
\end{itemize}
All of the dynamics of the automorphism group ${\rm Aut}(S_{A,B,C,D})$ of $S_{A,B,C,D}$ can
be described quite explicitly for the Picard Parameter and therefore they
serve as a kind of ``basepoint'' for the dynamics (akin to the role that
$z \mapsto z^2$ plays for the quadratic family). The corresponding surface $S_{0,0,0,4}$
has four singular points and it is called the Cayley Cubic.

The Markoff Parameter $(0,0,0,0)$ is of special interest because 
the dynamics on $S_{0,0,0,0}$ 
has been studied extensively due to deep connections
with number theory, Teichm\"uller Theory, and other areas; see, e.g.,\
\cite{bowditch} and \cite{series}.   However, unlike the Picard Parameter,
there are many difficult questions about the dynamics of $\AUTOGROUP$ for the
Markoff Parameter.  In fact, most, but not all, of the questions posed in this
note are relevant and interesting for the Markoff Parameter.

While the full four-dimensional family of parameters $(A,B,C,D) \in \C^4$ corresponds to
the relative character varieties (of representations in $\SL(2,\mathbb{C})$) of the two dimensional sphere with four punctures (holes),
the sub-family $(A,B,C,D) = (0,0,0,D)$ corresponds to the relative character varieties of
the two dimensional torus with one puncture (hole), hence the name {\em Punctured Torus
Parameters} (see Section \ref{SEC:CHARACTERDYNAMICS} below).  They are somewhat simpler to study than the general four-dimensional
family.  Most of this paper focuses on the one parameter family of punctured torus parameters, however it ends
with Section \ref{SEC:ARBITRARYPARAMS} about the full four dimensional family corresponding to arbitrary $(A,B,C,D)$.

Let us close this introduction by mentioning that readers who are more interested in the dynamics of
groups of holomorphic automorphisms of {\em compact surfaces} may wish to consult the paper \cite{CD} by Cantat and Dujardin 
in this same volume.

\section{Background and definitions}\label{SUBSEC:BACKGROUND_AND_DEFS}
For simplicity we will start with the Punctured Torus Parameters $(A,B,C,D) = (0,0,0,D)$, and,
to abridge notation, we will use a single subscript $D$
to denote which surface is being considered: 
\begin{align*}
S_D \equiv S_{0,0,0,D} = \{(x,y,z) \in \C^3 \, : \, x^2 + y^2 + z^2 +xyz = D\}.
\end{align*}
As $D$ ranges over $\C$, these surfaces provide a fibration of $\C^3$ with some of the surfaces
being singular.  For example, $S_0$ is singular at $(0,0,0)$ and $S_4$ is the famous Cayley Cubic, which is singular
at four points $\{(-2,-2,-2), \, (-2,2,2), \, (2,-2,2), \, (2,2,-2)\}$.

Every line parallel to the $x$-axis
intersects the surface $S_D$ at two points (counted with multiplicity) and one can therefore
define an involution $s_x: S_D \rightarrow S_D$ that switches them:
\begin{align}\label{EQN:DEFSX}
s_x\left(x, y, z\right) = \left(-x-yz,  y, z \right).
\end{align}
Two further involutions $s_y: S_D \rightarrow S_D$ and $s_z: S_D \rightarrow S_D$
are defined analogously by means of lines parallel to the $y$-axis and to the $z$-axis, respectively. Namely, we have
$s_y\left(x, y, z\right) = \left(x,  -y-xz, z \right)$ and $s_z\left(x, y, -z-xy \right)$.

Consider the group
\begin{align*}
\AUTOGROUP^\pm \equiv \AUTOGROUP^\pm_D = \langle s_x, s_y, s_z \rangle\leq {\rm Aut}(S_{D}),
\end{align*}
where ${\rm Aut}(S_{D})$ denotes the group of all (algebraic) holomorphic diffeomorphisms of $S_{D}$.
It turns out that the index of $\AUTOGROUP^\pm$ as subgroup of ${\rm Aut}(S_{D})$ is at most $24$; See \cite[Theorem 3.1]{cantat-2} and also \cite{HUTI}.

Consider also the subgroup $\AUTOGROUP \equiv \AUTOGROUP_{D}$ of $\AUTOGROUP^\pm$ which is defined by
\begin{align*}
\AUTOGROUP \equiv \AUTOGROUP_{D}= \langle g_x,  g_y, g_z \rangle  < \AUTOGROUP^\pm,
\end{align*}
where $g_x = s_z \circ s_y$, $g_y = s_x \circ s_z$, and $g_z = s_y \circ s_x$.

\begin{remark}
The formulae for the elements of $\AUTOGROUP^\pm$ and, in particular, of $\AUTOGROUP < \AUTOGROUP^\pm$, do not
depend on the parameter $D$ and, on occasion, it is helpful to consider these
groups acting on all of $\C^3$.  However, for most of the time we focus on the action of these groups on each particular surface~$S_D$.
\end{remark}

\subsection{Invariant holomorphic two-form}\label{SEC:INV_FORM}
The smooth part of $S_{D}$
comes equipped with a holomorphic volume form
\begin{align}\label{EQN:VOLUME_FORM}
\Omega = \frac{dx \wedge dy}{2z+xy} = \frac{dy \wedge dz}{2x+yz} = \frac{dz \wedge dx}{2y + zx}
\end{align}
and a simple calculation shows that $s_x^* \Omega = -\Omega$ and similarly for
$s_y$ and $s_z$.  Therefore,  $\AUTOGROUP$ could also have been defined more intrinsically as the index two subgroup
of $\AUTOGROUP^\pm$ that preserves $\Omega$. 
Hence all elements of $\AUTOGROUP^\pm$ preserve the associated real volume form $\Omega \wedge
\overline{\Omega}$.  The latter assigns infinite volume to the whole surface
$S_{D}$ but a finite volume to a sufficiently small neighborhood of each singular point of~$S_{D}$.

\subsection{Fatou/Julia Dichotomy}\label{SEC:FATOU_JULIA}
The {\em Fatou set} of the group action $\AUTOGROUP$ is defined as
\begin{align*}
{\mathcal F}_{D} = \{p \in S_{D} \, : \, \mbox{$\AUTOGROUP$ forms a normal family in some open neighborhood of $p$}\}.
\end{align*}
Here, the term {\em normal} is interpreted in the sense of Montel's Theorem:
given an open set $U$ we require for any sequence of mappings
$(\gamma_{n})_{n=1}^\infty  \subset \AUTOGROUP$ that there be a subsequence
$(\gamma_{n_j})_{j=1}^\infty$ such that $\gamma_{n_j}$ converges uniformly on
compact subsets of $U$. Because $S_D$ is non-compact, the definition of {\it convergent sequence on a compact
set $\overline{V} \subset S_D$} needs to include the possibility that $(\gamma_{n_j})_{j=1}^\infty$
{\em diverges properly to infinity}, i.e.\ for any compact  $K \subset S_D$ there are at most
finitely many indices $j$ such that $\gamma_{n_j}\left(\overline{V}\right) \cap K \neq
\emptyset$.

\begin{remark}
The above definition of Fatou set has the advantage of being easy to state
while still yielding interesting results.   However, there are other possible definitions
that might be worthy of consideration.

One possible alternative is to let $S_D' = S_D \cup \{\infty \}$ denote the
one-point compactification of $S_D$ and to consider the elements of
$\AUTOGROUP$ as mappings $\gamma: S_D \rightarrow S_D'$.  In the definition of
{\em normal} one would then require the subsequence
$(\gamma_{n_j})_{j=1}^\infty$ (as in the previous paragraph) to converge
uniformly on compact sets of $U$ to a mapping $\Upsilon: U \rightarrow S_D'$.
A-priori, this alternative definition could lead to a bigger Fatou set
${\mathcal F}'_D$ than ${\mathcal F}_D$ (defined above)
because of the possibility that the image of the limit map $\Upsilon(U)$ contains
both $\infty$ and points of $S_D$. This possibility, however, can be ruled out by showing that
${\mathcal F}'_D$ is Kobayashi hyperbolic, using essentially the same proof as
for Proposition 9.1 from \cite{RR} where it was shown that ${\mathcal F}_D$ is Kobayashi hyperbolic.
Therefore ${\mathcal F}'_D$ = ${\mathcal F}_D$.
\end{remark}

The {\em Julia set} of the group action $\AUTOGROUP$ is defined as
\begin{align*}
{\mathcal J}_{D} = S_{D} \setminus {\mathcal F}_{D}.
\end{align*}
It follows from the definitions that ${\mathcal
F}_{D}$ is open while ${\mathcal J}_{D}$ is closed.  Furthermore
both sets are invariant under $\AUTOGROUP$.
It is rather easy to see that ${\mathcal J_D} \neq \emptyset$ for any $D \in
\C$; see e.g.\ \cite[Lemma 4.3]{RR}.  With more effort one can show that for every parameter $D$
the group $\AUTOGROUP_D$ contains elements $f_D$ having positive entropy and exhibiting dynamics quite
similar to that of H\'enon maps; see \cite{IU_ERGODIC,cantat-1,cantat-2}.   The Julia set associated with iteration of each
such individual mapping is a subset of ${\mathcal J}_D$, including the collection of all saddle-type periodic points
of each such mapping.

One motivation for defining this dichotomy is the following theorem from \cite{RR}:

\begin{theoremA}
For any parameter $D \in \mathbb{C}$ there is a point $p \in {\mathcal J}_{D}$ whose
orbit under $\AUTOGROUP$ is dense in~${\mathcal J}_{D}$.
\end{theoremA}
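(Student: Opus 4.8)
The plan is to derive the existence of a point with dense orbit from \emph{topological transitivity} of the $\AUTOGROUP$-action on ${\mathcal J}_D$, via Birkhoff's Transitivity Theorem. First I would record the soft point-set preliminaries. Since ${\mathcal J}_D = S_D \setminus {\mathcal F}_D$ is closed in $S_D$, and $S_D$ is a separable, locally compact metric space, ${\mathcal J}_D$ is itself a separable, completely metrizable (Polish) space; in particular it is second countable and Baire. The group $\AUTOGROUP$ is countable and acts by homeomorphisms preserving ${\mathcal J}_D$. Birkhoff's Transitivity Theorem then asserts that, provided ${\mathcal J}_D$ has no isolated points, the action admits a point with dense orbit (in fact a dense $G_\delta$ set of them) if and only if it is \emph{topologically transitive}, i.e.\ for every pair of nonempty relatively open sets $U, V \subseteq {\mathcal J}_D$ there exists $\gamma \in \AUTOGROUP$ with $\gamma(U) \cap V \neq \emptyset$. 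Thus the theorem reduces to checking that (i) ${\mathcal J}_D$ is perfect and (ii) the action is topologically transitive.

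For the dynamical input I would use the positive-entropy, H\'enon-like element $f = f_D \in \AUTOGROUP$ furnished by the work of Cantat and of Iwasaki--Uehara \cite{cantat-1,cantat-2,IU_ERGODIC}, together with the fact recorded above that the saddle periodic points of $f$ lie in ${\mathcal J}_D$. Fix a saddle fixed (or periodic) point $q$ of $f$. The crux is the following two density statements:
\[
\overline{W^{u}(q)\cap {\mathcal J}_D} = {\mathcal J}_D \qquad\text{and}\qquad \overline{W^{s}(q)\cap {\mathcal J}_D} = {\mathcal J}_D ,
\]
where $W^{u}(q)$ and $W^{s}(q)$ are the unstable and stable manifolds of $q$. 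Granting these, transitivity follows from the standard inclination-lemma ($\lambda$-lemma) mechanism: given nonempty open $U, V \subseteq {\mathcal J}_D$, choose $u \in W^{s}(q) \cap U$ and $v \in W^{u}(q) \cap V$. As a small piece of ${\mathcal J}_D$ through $u$ is iterated forward by $f$ it is drawn toward $q$ along $W^{s}(q)$, and the $\lambda$-lemma forces its images to converge in the $C^1$ topology to ever longer pieces of $W^{u}(q)$; since $W^{u}(q)$ accumulates on $v$ and ${\mathcal J}_D$ is invariant, some forward image lands in $V$. Hence $f^N(U) \cap V \neq \emptyset$ for some $N$, giving $\gamma = f^N \in \AUTOGROUP$. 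The same homoclinic picture supplies (i): a transverse intersection of $W^{u}(q)$ and $W^{s}(q)$ produces a horseshoe, hence a Cantor set of saddle points inside ${\mathcal J}_D$, so ${\mathcal J}_D$ is uncountable; were some $p$ isolated, transitivity would force ${\mathcal J}_D = \overline{\AUTOGROUP\, p}$ to be a single countable orbit, a contradiction.

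The hard part will be establishing the two density statements, and this is exactly where the full group $\AUTOGROUP$, rather than the single map $f$, must be brought to bear. For an individual H\'enon-like map the unstable manifold of a saddle is dense only in its forward Julia set, which is a priori smaller than the group Julia set ${\mathcal J}_D$; the extra generators $g_x, g_y, g_z$ are needed to ``spread'' the invariant manifolds over all of ${\mathcal J}_D$. I would approach this by first proving that the saddle periodic points of the elements of $\AUTOGROUP$ are dense in ${\mathcal J}_D$, and then showing that $\overline{W^{u}(q)\cap {\mathcal J}_D}$ is a nonempty closed $\AUTOGROUP$-invariant subset of ${\mathcal J}_D$; a minimality/absorption argument — exploiting that the chaotic expansion on ${\mathcal J}_D$ prevents any proper, nondegenerate, closed invariant piece from surviving — would then identify this closure with all of ${\mathcal J}_D$, and symmetrically for $W^{s}(q)$. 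Controlling this spreading uniformly in the parameter $D$, and in particular at the singular parameters $D = 0$ and $D = 4$ where $S_D$ fails to be smooth, is the principal technical obstacle and the step most dependent on the fine structure of ${\mathcal J}_D$ developed in \cite{RR}.
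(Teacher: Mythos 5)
Your first reduction --- pass to topological transitivity of the $\AUTOGROUP$-action on ${\mathcal J}_D$ and invoke Birkhoff's Transitivity Theorem on the Polish space ${\mathcal J}_D$ --- is exactly the strategy of the proof in \cite{RR} (the present paper gives no proof of Theorem A, only the citation, but it signals this route). That part is fine; in fact for a group action you do not need ${\mathcal J}_D$ to be perfect, since the set of points with dense orbit is $\bigcap_n \bigcup_{\gamma}\gamma^{-1}(V_n)$ over a countable base, each term being open and dense by transitivity, so Baire suffices.

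The gap is in the mechanism you propose for transitivity. Everything funnels through the density statements $\overline{W^{u}(q)\cap {\mathcal J}_D}={\mathcal J}_D$ and $\overline{W^{s}(q)\cap {\mathcal J}_D}={\mathcal J}_D$ for a saddle $q$ of a single positive-entropy element, and through the preliminary claim that saddle periodic points of elements of $\AUTOGROUP$ are dense in ${\mathcal J}_D$. These fail as stated. For the Picard Parameter one has ${\mathcal J}_4=S_4$, yet Remark \ref{RMK_SADDLE_PTS} says the saddle fixed points accumulate only on the real two-sphere $\Phi(\mathbb{T}^2)\subseteq S_4\cap[-2,2]^3$; moreover, via the semi-conjugacy of Proposition \ref{PROP:SEMI_CONJUGACY}, the unstable manifold of a saddle of a hyperbolic monomial map is the image of a complex one-parameter subgroup of $\mathbb{C}^*\times\mathbb{C}^*$ whose closure is only real three-dimensional, so $\overline{W^u(q)}$ is a proper subset of ${\mathcal J}_4$. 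For the Markoff Parameter, density of saddles in ${\mathcal J}_0$ is precisely Question \ref{Q4} of this paper, i.e.\ open. So the approach cannot work uniformly in $D$, and the ``minimality/absorption'' step you defer is not a technicality but the whole theorem (and is circular as sketched: ${\mathcal J}_D$ does admit proper closed invariant subsets, e.g.\ $\Phi(\mathbb{T}^2)$ when $D=4$). There is also a secondary flaw in the $\lambda$-lemma step: it yields $f^N(U')\cap V'\neq\emptyset$ for open subsets of the surface but not a point of ${\mathcal J}_D$ in that intersection, and ``a small piece of ${\mathcal J}_D$ through $u$'' is not a disc to which the inclination lemma applies. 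The argument in \cite{RR} avoids hyperbolic elements altogether: as indicated in Section \ref{SEC:ARBITRARYPARAMS}, transitivity is obtained from the parabolic elements $g_x,g_y,g_z$, whose restrictions to the conic fibers $\{x=c\}$, $\{y=c\}$, $\{z=c\}$ are linear maps that can be analyzed completely for every $D$, including $D=0$ and $D=4$.
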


(In fact, this theorem holds for the full four-dimensional family $S_{A,B,C,D}$.)

\begin{remark}
One might be concerned about applying the Fatou/Julia dichotomy in a neighborhood of a singular point of $S_D$.
However, it is relatively easy to check that any such point must be in the Julia set; see \cite[Remark 3.1]{RR}.
\end{remark}

\begin{remark} 
For any parameter $(A,B,C,D)$ any singular point of $S_{A,B,C,D}$ is a quotient
singularity; See, e.g. \cite[Sec. 3.4]{cantat-2}.   Therefore, $S_{A,B,C,D}$
has the structure of a smooth orbifold and the group $\AUTOGROUP_{A,B,C,D}$ acts by
holomorphic self-maps of that orbifold.   Considering the Fatou/Julia dichotomy in this
broader context one again finds that any singular point is in the Julia set ${\mathcal J}_{A,B,C,D}$.
\end{remark}

\subsection{Locally Non-discrete/Locally Discrete Dichotomy}\label{SEC:DISC}
Let $M$ be a (possibly open) connected complex manifold and consider a group $G$ of holomorphic diffeomorphisms of $M$.
The group $G$ is said to be {\it locally non-discrete} on an open set $U \subset M$
if there is a sequence of maps $(f_n)_{n=0}^{\infty} \in G$ satisfying the following conditions (see for example \cite{REBELO_REIS}):
\begin{enumerate}
 \item For every $n$, $f_n$ is different from the identity.
  \item The sequence of maps $f_n$ converges uniformly to the identity on compact subsets of $U$.
\end{enumerate}
If there is no such sequence $f_n$ on $U$ we say that $G$ is {\it locally discrete} on $U$.

The {\em Locally Non-discrete Locus} for the action of $\AUTOGROUP$ is
\begin{flalign*}
\nondiscrete_{D} = \{p \in S_{D} \, : \, \AUTOGROUP_{D} \, \mbox{is locally non-discrete on an open neighborhood $U$ of $p$}\}.
\end{flalign*}
The {\em Locally Discrete Locus} is $\discrete_{D} =   S_{D} \setminus \nondiscrete_{D}$.
By definition, $\nondiscrete_{D}$ is open, $\discrete_{D}$ is closed, and each is invariant under $\AUTOGROUP_{D}$.

\begin{remark}
The interplay between the Fatou/Julia and Locally Non-discrete/Locally Discrete dichotomies
is subtle and rich, with no immediate relationship between them.
\end{remark}

\begin{remark}
The definition of locally non-discrete group can be formulated in broader contexts, including in
the setting of pseudo-groups and with weaker regularity conditions,
both on the mappings themselves and on the mode of convergence.  
To abridge the discussion here we will stick to the context of groups of globally defined homeomorphisms.

The present setting of biholomorphic mappings of a complex manifold is the simplest one
because locally uniform convergence implies $C^\infty$ convergence
by virtue of the Cauchy formula.
Local non-discreteness has also been considered beyond the holomorphic
setting, see for example \cite{GHYS,R1}. Some amount of regularity is usually needed to have simple criteria to
ensure local non-discreteness, in particular dealing with homeomorphisms in full generality may be tricky (see
nonetheless \cite{R2}).
\end{remark}

\section{Dynamics on the character variety of the one-holed torus.}
\label{SEC:CHARACTERDYNAMICS}
The interpretation of the action of $\AUTOGROUP_D^\pm$ in terms of the natural dynamics on the character variety of the two-dimensional torus with one puncture (hole) plays an important role throughout the paper, and especially in Section \ref{SUBSEC:FJ_MARKOFF}, below.
We refer the reader to \cite[Section 4]{bowditch} and \cite{GOLDMAN}, as well as the previously mentioned papers \cite{cantat-1,cantat-2,RR} for more details.
(This section can be skimmed over on a first reading and then returned to later as details are needed.)

Let $\mathbb{T}_0$ denote the topological two-dimensional torus with one puncture (hole).   Its fundamental
group satisfies $\pi_1(\mathbb{T}_0) \cong \F_2 \cong \langle a,b \rangle$, where $\F_2$ denotes
the free group on two generators $a$ and $b$.   Note that the commutator $aba^{-1}b^{-1}$ corresponds to a simple loop around the puncture.
Let
\begin{align*}
{\rm Rep}(\mathbb{T}_0) = \{\rho: \F_2 \rightarrow \SL(2,\C)\}
\end{align*}
be the space of all representations of $\pi_1(\mathbb{T}_0) \cong \F_2$
into $\SL(2,\C)$.  They are not assumed to be faithful or discrete and therefore
one has ${\rm Rep}(\mathbb{T}_0) \cong \SL(2,\C)^2$, with the isomorphism given
by $\rho \mapsto (\rho(a),\rho(b))$.  

The {\em Character Variety} is defined as
\begin{align*}
\chi = {\rm Rep}(\mathbb{T}_0) // \SL(2,\C),
\end{align*}
where the quotient corresponds to simultaneous conjugation by an element of $\SL(2,\C)$.  (The double $//$ denotes that one is doing the ``categorical'' quotient; for more details see, for example, \cite[Sec.~2]{BG}.)   For any fixed choice of $\tau \in \C$, one can also consider the {\em Relative Character Variety}
\begin{align*}
\chi_\tau = \{[\rho] \in \chi \, : \, {\rm tr}(\rho(aba^{-1}b^{-1}) = \tau\}.
\end{align*}
Following the work of Fricke one can parameterize $\chi$ using the traces of elements:
\begin{align}\label{EQN:FRICK_PARAM}
\chi \cong \mathbb{C}^3 \qquad \mbox{given by} \qquad [\rho] \mapsto \left(\tr(\rho(a)), \tr(\rho(b)), -\tr(\rho(ab))\right) = (x,y,z).
\end{align}
Using the Fricke trace relations one computes that
\begin{align*}
\tr(\rho(aba^{-1}b^{-1})) = x^2 + y^2+z^2 +xyz -2
\end{align*}
and therefore the relative character variety is parameterized as
\begin{align*}
\chi_\tau \cong \{(x,y,z) \in \C^3 \, : \, x^2 + y^2+z^2 +xyz -2 = \tau\} = S_{\tau+2}.
\end{align*}
In particular, the Markoff surface $S_0$ corresponds to imposing $\tr(\rho(aba^{-1}b^{-1})) = -2$ and the Cayley Cubic $S_4$ (Picard Parameter)
corresponds to imposing $\tr(\rho(aba^{-1}b^{-1})) = 2$.

\begin{remark}
It is more common to omit the minus sign on the last coordinate $\tr(\rho(ab))$ in {\rm (}\ref{EQN:FRICK_PARAM}{\rm )}.  Doing so leads to the more standard equation $x^2+y^2+z^2 -xyz-2= \tau$
for $\chi_\tau$.  We introduced the minus sign so that the coordinates $(x,y,z)$ satisfy the equation of $S_D$
which fits naturally into the four parameter family of surfaces $S_{A,B,C,D}$, whose defining equation is the conventional way to describe the character variety of the
four times punctured (holed) sphere.
\end{remark}

We now describe how the dynamics generated by the involutions $s_x, s_y,$ and $s_z$ enters the picture.
The automorphism group $\Aut(\F_2) = \Aut(\pi_1(\mathbb{T}_0))$ acts on
${\rm Rep}(\mathbb{T}_0)$ by precomposition.   Given $\phi \in \Aut(\F_2)$ we have
\begin{align*}
f_\phi: {\rm Rep}(\mathbb{T}_0) \rightarrow {\rm Rep}(\mathbb{T}_0)  \qquad \mbox{given by} \quad  f_\phi(\rho) =  \rho \circ \phi^{-1}.
\end{align*}
If $\phi$ is an inner automorphism of $\F_2$, then the induced action consists of
conjugation by an element of $\SL(2,\C)$.  Therefore this action of $\Aut(\F_2)$ on ${\rm Rep}(\mathbb{T}_0)$ factors through an
action of $\Out(\F_2) = \Aut(\F_2)/\Inn(\F_2)$ on $\chi$.

A theorem of Nielsen \cite{N} asserts that $\Out(\F_2) \cong {\rm GL}(2 , \Z) \cong {\rm Mod}^\pm(\mathbb{T}_0)$, where
${\rm Mod}^\pm(\mathbb{T}_0)$ denotes the extended mapping class group of $\mathbb{T}_0$.   It consists of the group of isotopy classes
of self-homeomorphisms of $\mathbb{T}_0$, including those that reverse orientation.   Using that $\Out(\F_2) \cong {\rm Mod}^\pm(\mathbb{T}_0)$
and the fact that a self-homeomorphism of $\mathbb{T}_0$ sends a small simple loop around the puncture to another small
simple loop around the puncture, possibly with the opposite orientation, 
one has that any $\phi \in \Out(\F_2)$ maps the commutator $aba^{-1}b^{-1}$ to (a conjugate) of itself or its inverse.
Since  similar matrices have the same trace and since any element of $\SL(2,\C)$ has the same trace as its inverse, we
conclude that the action of $\Out(\F_2)$ on $\chi$
preserves $\tr(\rho(aba^{-1}b^{-1}))$.  In particular $\Out(\F_2)$ acts on
$\chi_{-2} \equiv S_0$.

The action of $\Out(\F_2)$ on $\chi_{-2} \equiv S_0$  is
explicitly described as follows.   Given the equivalence class $[\phi] \in \Out(\F_2)$ of an automorphism $\phi \in \Aut(\F_2)$ 
we define
\begin{align}\label{EQN:INDUCEDMAPPING}
f_\phi(x,y,z) &= f_\phi \big(\tr(\rho(a)), \tr(\rho(b)), -\tr(\rho(ab)\big) \\ 
&= \big(\tr(\rho(\phi^{-1}(a))), \tr(\rho(\phi^{-1}(b))), -\tr(\rho(\phi^{-1}(ab))\big). \nonumber
\end{align}
Using the isomorphism ${\rm GL}(2 , \Z) \cong \Out(\F_2)$, by minor abuse of notation, we
assign to each $M~\in~{\rm GL} (2 , \Z)$ the automorphism 
\begin{align}\label{EQN:INDUCEDMAPPING2}
f_M(x,y,z) \equiv f_{\phi_M}(x,y,z),
\end{align}
where $[\phi_M] \in \Out(\F_2)$ corresponds to the $2\times 2$ matrix $M \in {\rm GL} (2 , \Z)$.

Consider the automorphism $\phi(a,b) = (a^{-1},b^{-1})$ associated with the matrix $-{\rm Id} \in {\rm GL}(2,\mathbb{Z})$.
For any $M \in {\rm GL} (2 , \Z)$ we have $\tr(M) = \tr\left(M^{-1}\right)$ and $\tr (MN) = \tr(NM)$ so that
$f_{-{\rm Id}}(x,y,z) = (x,y,z)$.  Therefore, the action of ${\rm GL}(2,\mathbb{Z})$ factors through an action of ${\rm PGL}(2,\mathbb{Z}) = {\rm GL}(2,\mathbb{Z}) / \left\{\pm {\rm Id}\right\}$ on $S_{D}$.

Rather than working with the action of all of ${\rm PGL}(2,\mathbb{Z})$ on $S_D \equiv \chi_{D-2}$ we focus on the
action of the following index $6$ subgroup:
\begin{align*}
\MATRIXGP^\pm_2 := \{M \in {\rm PGL}(2,\mathbb{Z}) \, : \, M \equiv {\rm Id} \,  {\rm mod} 2 \}.
\end{align*}
The group $\MATRIXGP^\pm_2$ is generated by the matrices:
\begin{align*}
M_x:= \left[\begin{array}{cc} -1 & -2 \\ 0 & 1 \end{array}\right], \qquad M_y:= \left[\begin{array}{cc} 1 & 0 \\ -2 & -1\end{array}\right], \qquad \mbox{and} \qquad
M_z:=\left[\begin{array}{cc} 1 & 0 \\ 0 & -1\end{array}\right].
\end{align*}
To see this, consider 
the more classical congruence subgroup 
\begin{align*}
\MATRIXGP_2 := \{M \in {\rm PSL}(2,\mathbb{Z}) \, : \, M \equiv {\rm Id} \,  {\rm mod} 2\},
\end{align*}
which admits the well-known generators
\begin{align*}
\left[\begin{array}{cc} 1 & 2 \\ 0 & 1 \end{array}\right] \quad \mbox{and} \quad  \left[\begin{array}{cc} 1 & 0 \\ 2 & 1\end{array}\right].
\end{align*}
See, for example, \cite[Section 16.3]{CONWAY} or Exercises 6 and 7 from \cite[Chapter 13]{ZAK}.   
The claim that $M_x, M_y$, and $M_z$ generate $\MATRIXGP^\pm_2$ then follows because
\begin{itemize}
\item[(i)] $\MATRIXGP_2$ is an index two subgroup of $\MATRIXGP_2^\pm$,
\item[(ii)] both of the generators of $\MATRIXGP_2$ can be expressed as products of $M_x, M_y,$ and $M_z$ (up to equivalence modulo multiplication by $-{\rm Id}$), and
\item[(iii)] the matrices $M_x, M_y,$ and $M_z$ are in $\MATRIXGP_2^\pm$ but not in $\MATRIXGP_2$.
\end{itemize}

These matrices $M_x, M_y$, and $M_z$ correspond to the elements of $\Out(\F_2)$ represented respectively by the automorphisms
of $\F_2$ given by $\phi_{M_x}(a,b) =
(a^{-1} b^{-2},b), \ \phi_{M_y}(a,b) = (a,a^{-2}b^{-1})$ and $\phi_{M_z}(a,b) = (a,b^{-1})$.

\begin{proposition}
\label{PROP:DERIVINGSXSYSZ}
We have:
\begin{align*}
f_{M_x}(x,y,z) &=  (-x-yz,y,z) = s_x(x,y,z), \\
 f_{M_y}(x,y,z) &= (x,-y-xz,z) = s_y(x,y,z), \quad \mbox{and} \\
f_{M_z}(x,y,z) &= (x,y,-z-xy) = s_z(x,y,z).
\end{align*}
\noindent
For any parameter $D \in \mathbb{C}$ 
the assignments $M_x \rightarrow s_x$, $M_y \rightarrow s_y$, and $M_z \rightarrow s_z$ induce an isomorphism
of groups $\MATRIXGP^\pm_2 \rightarrow \AUTOGROUP_D^\pm$.
\end{proposition}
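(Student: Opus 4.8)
The statement has two parts: the three explicit formulas, and the claim that the assignment $M_x\mapsto s_x$, $M_y\mapsto s_y$, $M_z\mapsto s_z$ extends to a group isomorphism $\MATRIXGP^\pm_2\to\AUTOGROUP^\pm_D$. My plan is to dispatch the formulas by a direct trace computation, reduce the isomorphism claim to faithfulness via a homomorphism-plus-surjectivity argument, and then treat faithfulness as the real point.

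For the three formulas I would compute directly from the definition (\ref{EQN:INDUCEDMAPPING}) using the two basic $\SL(2,\C)$ trace identities $\tr(UV)+\tr(UV^{-1})=\tr(U)\tr(V)$ and $\tr(U^{-1})=\tr(U)$, together with cyclic invariance of the trace. Writing $A=\rho(a)$ and $B=\rho(b)$, so that $x=\tr(A)$, $y=\tr(B)$, $z=-\tr(AB)$, I would first invert each of the given automorphisms of $\F_2$; for instance $\phi_{M_x}^{-1}(a)=b^{-2}a^{-1}$ and $\phi_{M_x}^{-1}(b)=b$. Then each of $\tr(\rho(\phi^{-1}(a)))$, $\tr(\rho(\phi^{-1}(b)))$, and $\tr(\rho(\phi^{-1}(ab)))$ reduces to a polynomial in $x,y,z$ by repeated use of the identities; e.g. $\tr(AB^2)=\tr(AB)\tr(B)-\tr(A)=-yz-x$ recovers the first coordinate of $s_x$. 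Matching against (\ref{EQN:DEFSX}) and its analogues yields the displayed identities. This part is routine and, as remarked in the text, independent of $D$.

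For the isomorphism, I would first observe that $\phi\mapsto f_\phi$ is a homomorphism, since $f_{\phi_1}\circ f_{\phi_2}=f_{\phi_1\phi_2}$ follows immediately from $f_\phi(\rho)=\rho\circ\phi^{-1}$. Combined with the Nielsen isomorphism $\Out(\F_2)\cong\mathrm{GL}(2,\Z)$ and the factorization through $\mathrm{PGL}(2,\Z)$ already recorded in the text, this gives a homomorphism $\Phi:\MATRIXGP^\pm_2\to\Aut(S_D)$, $M\mapsto f_M$, where we use that $\Out(\F_2)$ preserves $\tr(\rho(aba^{-1}b^{-1}))$ and hence acts on each fiber $S_D$. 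The formulas just established say $\Phi(M_x)=s_x$, $\Phi(M_y)=s_y$, $\Phi(M_z)=s_z$; since $M_x,M_y,M_z$ generate $\MATRIXGP^\pm_2$ and $s_x,s_y,s_z$ generate $\AUTOGROUP^\pm_D$ by definition, $\Phi$ is a surjection onto $\AUTOGROUP^\pm_D$. Everything then reduces to injectivity of $\Phi$, i.e. to faithfulness of the $\MATRIXGP^\pm_2$-action on $S_D$.

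Injectivity is where I expect the real work to be. The plan is to identify $\MATRIXGP^\pm_2$ abstractly: the generators satisfy $M_x^2=M_y^2=M_z^2=\id$ in $\mathrm{PGL}(2,\Z)$, and since the orientation-preserving subgroup $\MATRIXGP_2$ is free of rank two and of index two in $\MATRIXGP^\pm_2$, one identifies $\MATRIXGP^\pm_2\cong\Z/2*\Z/2*\Z/2$ with the three order-two free factors generated by $M_x,M_y,M_z$. Faithfulness of $\Phi$ is then equivalent to the statement that $s_x,s_y,s_z$ satisfy no relation beyond $s_x^2=s_y^2=s_z^2=\id$, and I would prove this by a ping-pong argument on $S_D$. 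The idea is to use the regions near infinity where a single coordinate dominates: when $|yz|$ is large relative to $|x|$, the new first coordinate $-x-yz\approx -yz$ of $s_x$ is large, so $s_x$ carries the "large $|y|$ or $|z|$'' regions into the "large $|x|$'' region, and symmetrically for $s_y,s_z$; a nontrivial reduced word in the $s_i$ therefore moves a suitable point out of its starting region and cannot be the identity. This is exactly the mechanism underlying the free-product description of the automorphism groups of these cubic surfaces, so I would either carry out the ping-pong directly or invoke the structural results of El-Huti \cite{HUTI} and Cantat \cite{cantat-2}. I expect this faithfulness step to be the main obstacle, and I would take care that the ping-pong regions persist for every $D\in\C$, including the singular surfaces $S_0$ and $S_4$, so that the isomorphism holds uniformly in the parameter as claimed.
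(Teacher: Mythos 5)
Your proposal is correct, and on the first half it coincides with the paper's argument: the appendix proof establishes the three formulas exactly as you describe, by inverting each $\phi_{M_\bullet}$ and reducing traces via $\tr(MN)=\tr(M)\tr(N)-\tr(MN^{-1})$, $\tr(M^{-1})=\tr(M)$, and cyclicity (e.g.\ $\tr(AB^2)=\tr(AB)\tr(B)-\tr(A)=-yz-x$ is literally the computation in the paper). Where you diverge is on the isomorphism claim: the paper's own proof stops after the formulas and outsources surjectivity-plus-faithfulness entirely to the cited references (the appendix of Goldman's paper and Section 3.2 of Cantat--Loray), whereas you reduce it explicitly to the statement that $s_x,s_y,s_z$ generate $\Z/2*\Z/2*\Z/2$ inside $\Aut(S_D)$ and propose to prove that by ping-pong near infinity. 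That reduction is sound and your ping-pong mechanism is the right one --- it is precisely El-Huti's theorem, and the growth estimate you describe is the same one the paper uses in Appendix B to exhibit a Fatou component --- so carrying it out would make the proof self-contained in a way the paper's is not. Two points deserve care if you execute it. First, the identification $\MATRIXGP^\pm_2\cong\Z/2*\Z/2*\Z/2$ does not follow formally from ``index-two overgroup of a free group of rank two with three involutions''; you need to actually verify it, e.g.\ by checking that $M_zM_x$ and $M_yM_z$ are the standard free generators of $\MATRIXGP_2$ and running a normal-form argument, or by realizing $\MATRIXGP^\pm_2$ as the reflection group of an ideal triangle. Second, the ping-pong must produce, for \emph{every} $D$ (including the singular surfaces), a point of $S_D$ in the escaping region on which every nonempty reduced word acts nontrivially; since the quadratic terms dominate uniformly in $D$ near infinity this works, but it is the step where the actual estimates live, and invoking \cite{HUTI} or \cite[Section 3.2]{cantat-2} there, as you allow yourself to do, is exactly what the paper does.
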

We refer the reader to \cite[Appendix]{goldman-1} and \cite[Section
3.2]{cantat-2}.  For the convenience of the reader we also include a brief
proof of this proposition in Appendix \ref{SEC:DERIVINGSXSYSZ}.

\section{Picard Parameter $D=4$}\label{SEC:PICARD}
For parameter $D = 4$ the dynamics of $\AUTOGROUP_D$ can be described quite explicitly.
Associated with a matrix $M = [m_{ij}] \in {\rm GL}(2,\mathbb{Z})$ is a monomial mapping 
\begin{align*}
\eta_M: \mathbb{C}^* \times
\mathbb{C}^* \rightarrow \mathbb{C}^* \times \mathbb{C}^* \qquad \mbox{given by} \qquad \eta_M \left(u, v \right) = \left(u^{m_{11}} v^{m_{12}}, u^{m_{21}} v^{m_{22}}\right).
\end{align*}
Consider also the mapping $\Phi: \mathbb{C}^* \times \mathbb{C}^* \rightarrow S$ given by
\begin{align*}
 \Phi(u,v) = \left(-u - 1/u, -v - 1/v, -u/v - v/u \right),
\end{align*}
which 
is a degree two ramified cover.  It is the quotient map for the action of the central monomial involution $(u,v) \mapsto (1/u,1/v)$.

\begin{proposition}\label{PROP:SEMI_CONJUGACY}
$\Phi$ semi-conjugates the action of ${\rm GL}(2,\mathbb{Z})$ on $\mathbb{C}^* \times \mathbb{C}^*$ by monomial mappings to the action of
${\rm GL}(2,\mathbb{Z})$ on $S_4$ described in  {\rm (}\ref{EQN:INDUCEDMAPPING2}{\rm )}.
More specifically, given $M \in {\rm GL}(2,\mathbb{Z})$ and $(u,v) \in \mathbb{C}^* \times \mathbb{C}^*$,
we have
\begin{align*}
\Phi \circ \eta_M(u,v) = f_M \circ \Phi(u,v).
\end{align*}
\end{proposition}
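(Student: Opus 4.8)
The plan is to recognize $\Phi$ as the map recording the three Fricke trace coordinates (\ref{EQN:FRICK_PARAM}) of an explicit two-parameter family of diagonal (hence abelian) representations, and then to exploit that $f_M$ is, by (\ref{EQN:INDUCEDMAPPING})--(\ref{EQN:INDUCEDMAPPING2}), precomposition by $\phi_M^{-1}$, which on abelian representations factors through the abelianization $\F_2 \to \Z^2$ and therefore acts on eigenvalues by a monomial map. Concretely, set
\[
\rho_{u,v}(a) = \mathrm{diag}(-u, -u^{-1}), \qquad \rho_{u,v}(b) = \mathrm{diag}(-v^{-1}, -v),
\]
so that
\[
\tr \rho_{u,v}(a) = -u-u^{-1}, \qquad \tr\rho_{u,v}(b) = -v-v^{-1}, \qquad -\tr\rho_{u,v}(ab) = -uv^{-1} - u^{-1} v,
\]
whence $\Phi(u,v) = \big(\tr\rho_{u,v}(a),\, \tr\rho_{u,v}(b),\, -\tr\rho_{u,v}(ab)\big)$.

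The first thing I would verify is that $\Phi$ lands in $S_4$. Writing $p = u+u^{-1}$, $q=v+v^{-1}$, $r = uv^{-1}+u^{-1}v$, so that $\Phi(u,v) = (-p,-q,-r)$, the defining polynomial $x^2+y^2+z^2+xyz$ evaluates to $p^2+q^2+r^2-pqr$, and a short algebraic identity shows this equals $4$. Equivalently, the commutator of two diagonal matrices is the identity, so $\tr\rho_{u,v}(aba^{-1}b^{-1})=2$, which is exactly the equation cutting out $\chi_2 \cong S_4$.

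Next I would reduce to generators. Both $M\mapsto\eta_M$ and $M\mapsto f_M$ are left actions of ${\rm GL}(2,\Z)$, that is $\eta_{MN}=\eta_M\circ\eta_N$ and $f_{MN}=f_M\circ f_N$; consequently the set of $M$ satisfying $\Phi\circ\eta_M = f_M\circ\Phi$ is closed under products and inverses, hence a subgroup. It therefore suffices to verify the identity on a generating set, for which I would take $M_x, M_y, M_z$ together with the few additional matrices (e.g.\ the swap $a\leftrightarrow b$ and a shear) needed to pass from the index-six subgroup $\MATRIXGP_2^\pm$ to all of ${\rm GL}(2,\Z)$. For $M_x,M_y,M_z$ the right-hand maps are already identified with the involutions $s_x,s_y,s_z$ in Proposition \ref{PROP:DERIVINGSXSYSZ}, so each check reduces to comparing an explicit monomial substitution in $\Phi$ against $s_x, s_y, s_z$.

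The conceptual reason the identity holds---and the step I expect to be the main obstacle---is the simultaneous bookkeeping of two conventions. First, precomposition by $\phi_M^{-1}$ sends the eigenvalue character $\chi\colon\Z^2\to\C^*$ of $\rho_{u,v}$ to $\chi\circ(\phi_M^{-1})_{\rm ab}$, and under the Nielsen isomorphism $\Out(\F_2)\cong{\rm GL}(2,\Z)$ the abelianization agrees with the given matrix only up to transpose. Second, the minus signs and the eigenvalue ordering built into $\Phi$ (the appearance of $v^{-1}$ rather than $v$ in $\chi(b)$) conjugate the relevant exponent matrix by $\mathrm{diag}(1,-1)$. The delicate point is to confirm that, after composing these two effects, the monomial map acting on $(u,v)$ is governed by \emph{precisely} the matrix $M$ of the right-hand side, and not by a transpose or ${\rm GL}(2,\Z)$-conjugate of it. I would anchor this bookkeeping on the generator $M_z=\mathrm{diag}(1,-1)$, which is insensitive to both a transpose and to conjugation by $\mathrm{diag}(1,-1)$, and then propagate the correct convention to $M_x$, $M_y$, and the remaining generators via the explicit monomial and trace formulas.
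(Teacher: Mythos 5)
First, a point of reference: the paper does not actually prove Proposition \ref{PROP:SEMI_CONJUGACY}; it only cites \cite[Section 1.5]{cantat-2}. So your proposal is being measured against a citation rather than a written argument. Your overall strategy is the standard one and is sound in outline: realize $\Phi$ as the Fricke coordinates $\left(\tr\rho_{u,v}(a),\tr\rho_{u,v}(b),-\tr\rho_{u,v}(ab)\right)$ of the diagonal representations $\rho_{u,v}$ (your formulas are correct, and $\Phi$ lands in $S_4$ because diagonal matrices commute, so $\tr\rho_{u,v}(aba^{-1}b^{-1})=2$), reduce to generators via the group structure, and check the generators.

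The gap is that you never carry out the generator computations, and the ``bookkeeping'' you flag as the main obstacle is not a formality that can be deferred: with the paper's stated conventions it changes the statement. Running your own plan on $M_x$: one has $\eta_{M_x}(u,v)=(u^{-1}v^{-2},v)$, so the first coordinate of $\Phi\circ\eta_{M_x}(u,v)$ is $-u^{-1}v^{-2}-uv^{2}$, whereas the first coordinate of $s_x\circ\Phi(u,v)$ is $-x-yz=-u^{-1}v^{2}-uv^{-2}$ (equivalently, $\tr\rho_{u,v}(a^{-1}b^{-2})$ computed with $\rho_{u,v}(b)=\mathrm{diag}(-v^{-1},-v)$). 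These disagree; what actually holds is $f_{M_x}\circ\Phi=\Phi\circ\eta_{JM_xJ}$ with $J=\mathrm{diag}(1,-1)$, and similarly for $M_y$. In other words, the conjugation by $\mathrm{diag}(1,-1)$ that you correctly predicted from the $v\mapsto v^{-1}$ asymmetry in $\rho_{u,v}$ does \emph{not} cancel against anything, and the proposition holds verbatim only after either replacing $\eta_M$ by $\eta_{JMJ}$, replacing $\Phi$ by $(u,v)\mapsto\Phi(u,v^{-1})$, or adjusting the isomorphism ${\rm GL}(2,\Z)\cong\Out(\F_2)$ used in (\ref{EQN:INDUCEDMAPPING2}). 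Your proposed way of settling the convention, anchoring on $M_z$, is structurally incapable of detecting this, precisely because $M_z=J$ is its own transpose and commutes with $J$; you must test a generator not fixed by the ambiguity, and $M_x$ settles it in three lines. A related loose end in the reduction step: with the paper's explicit correspondence $M_x\leftrightarrow\phi_{M_x}(a,b)=(a^{-1}b^{-2},b)$ (``rows of $M$ give the exponents of the images''), the assignment $M\mapsto\phi_M$ is an anti-homomorphism on abelianizations, so you should verify, not merely assert, that $M\mapsto f_M$ is a left action before concluding that the set of $M$ satisfying the identity is a subgroup. None of this threatens the conceptual content of the semi-conjugacy, since $M\mapsto JMJ$ is an automorphism of ${\rm GL}(2,\Z)$, but a complete proof has to commit to one consistent set of conventions and exhibit the generator computations under it.
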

We refer the reader to \cite[Section 1.5]{cantat-2} for more details.

Using this semi-conjugacy, it is rather straightforward to show all of the following assertions:
\begin{itemize}
\item[(i)] $\mathcal{J}_4 = S_4$ and $\mathcal{F}_4 = \emptyset$,
\item[(ii)] $\nondiscrete_{4} = \emptyset$ and $\discrete_{4} = S_4$, and
\item[(iii)]  Using Theorem A and Item (i), there exists $p \in S_4$ whose orbit under $\AUTOGROUP_4$ is dense
in $S_4$.
\end{itemize}

\begin{remark}\label{RMK_SADDLE_PTS}

Using the semi-conjugacy $\Phi$ one can show that
the closure of the set of points $q$ for which there exists $\gamma \in \AUTOGROUP \setminus \{\rm id\}$
with $\gamma(q) = q$ and $D\gamma(q)$ hyperbolic coincides with the topological sphere
$\Phi(\mathbb{T}^2) \subseteq S_4 \cap [-2,2]^3$. In particular such
points are not dense in $\mathcal{J}_4 = S_4$.

\end{remark}

\noindent
We refer the reader to \cite[Theorem D]{RR} for more details about Claims (i-iii) above and about the assertion in the remark.

\section{Markoff Parameter $D=0$}
\label{SEC:MARKOFF}
In contrast to the rather well-understood dynamics on the Cayley Cubic $S_4$, the dynamics on the Markoff surface $S_0$
is more mysterious.  One reason for this is that the Fatou/Julia and Locally Discrete/Locally Non-Discrete
dichotomies both become non-trivial.   

\subsection{Fatou/Julia decomposition of $S_0$}
\label{SUBSEC:FJ_MARKOFF}
Before asking questions about the dynamics of $\AUTOGROUP_0$ acting on $S_0$, we
explain why the Fatou set $\mathcal{F}_0$ is non-empty.  The reason goes back
to the works of Bers, Bowditch, and others and it naturally arises through the
interpretation of $S_0$ as a character variety, as explained in Section \ref{SEC:CHARACTERDYNAMICS}.  

\begin{remark} We include in Appendix \ref{SEC:FATOUCOMPS} a brief and lightweight proof that $\mathcal{F}_0 \neq \emptyset$.   It can be
read without learning the material on character varieties.   However, the reader should be advised that
the ideas of that proof all came from the works cited in Sections \ref{SEC:CHARACTERDYNAMICS} and \ref{SEC:MARKOFF} and that a better understanding comes from the
knowledge of its origins within the dynamics of character varieties.
\end{remark}

A Fuchsian representation $\rho: \F_2 \rightarrow \SL(2,\mathbb{R})$ satisfying
$\tr(\rho(aba^{-1}b^{-1})) = -2$ corresponds to a hyperbolic structure on
$\mathbb{T}_0$ with a cusp at the puncture.  By applying quasi-conformal deformations of
such a representation, Bers \cite{BERS} found an embedding of ${\rm Teich}(\mathbb{T}_0) \times {\rm Teich}(\mathbb{T}_0)$
into ${\rm Rep}(\mathbb{T}_0)$, where ${\rm Teich}(\mathbb{T}_0)$ denotes the Teichm\"uller space
of all marked hyperbolic structures on $\mathbb{T}_0$ having a cusp.   This leads to an
open set of Quasi-Fuchsian Representations
\begin{align*}
V_{\rm QF} \subset \chi_{-2} \equiv S_0.
\end{align*}
Recall that a group is Quasi-Fuchsian iff it is a discrete subgroup of ${\rm PSL(2,\C)}$ whose
limit set is a Jordan curve.   Since the action of $\Out(\F_2)$ corresponds to
changing the choice of generators for the group, it does not change either the group itself or its limit set. It follows
that $V_{\rm QF}$ is invariant under the action of $\Out(\F_2)$.

Consider the following two conditions on a point $(x,y,z) \in S_{0}$:
\begin{itemize}
\item[(BQ1)]  None of the coordinates of $\gamma(x,y,z)$ is in $[-2,2]$  for any $\gamma \in \AUTOGROUP$, and
\item[(BQ2)]  $\gamma(x,y,z) \in \Big(\mathbb{C} \setminus \overline{\mathbb{D}_{2}}\Big)^3$ for all but finitely many $\gamma \in \AUTOGROUP$.
\end{itemize}
Here, $\overline{\mathbb{D}_{2}}$ denotes the closed disc of radius $2$ centered at the origin.

The {\em Bowditch BQ Set} is defined as
\begin{align*}
V_{\rm BQ} = \{(x,y,z) \in S_0 \, : \, \mbox{Conditions BQ1 and BQ2  hold}\}.
\end{align*}
Note that $V_{\rm BQ}$ is open, as proved in \cite[Theorem 3.16]{bowditch}.

\begin{proposition}\label{PROP_CONTAINMENTS} We have
$V_{\rm QF} \subseteq V_{\rm BQ} \subseteq \mathcal{F}_0$.
In particular, $\mathcal{F}_0 \neq \emptyset$.
\end{proposition}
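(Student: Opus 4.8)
The plan is to prove the two containments $V_{\rm QF} \subseteq V_{\rm BQ}$ and $V_{\rm BQ} \subseteq \mathcal{F}_0$ separately; the nonemptiness of $\mathcal{F}_0$ is then immediate, since $V_{\rm QF}$ is nonempty (it is the image of the Bers embedding and contains in particular the Fuchsian locus).

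For the first containment I would work on the character-variety side. Fix $[\rho] \in V_{\rm QF}$, so $\rho$ is discrete and faithful with Quasi-Fuchsian image. By the description in (\ref{EQN:INDUCEDMAPPING}), the three coordinates of any $\gamma(x,y,z)$ with $\gamma \in \AUTOGROUP$ are, up to sign, the traces $\tr(\rho(w))$ for $w$ ranging over the $\Out(\F_2)$-images of the curves $a$, $b$, and $ab$. Each such $w$ is a primitive, non-peripheral element of $\F_2$, represented by an essential simple closed curve on $\mathbb{T}_0$ distinct from the boundary, so $\rho(w)$ is loxodromic. Since a loxodromic element of $\SL(2,\C)$ has trace outside $[-2,2]$, condition (BQ1) holds. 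For (BQ2) I would combine two finiteness facts: discreteness of $\rho(\F_2)$ forces only finitely many simple closed curves $w$ to satisfy $|\tr(\rho(w))| \le 2$ (bounded trace bounds the complex translation length, and a discrete group has only finitely many such conjugacy classes); and, via the Farey tessellation, each fixed curve $w$ belongs to only two of the relevant curve-triples, hence appears as a coordinate for only finitely many $\gamma$. Together these show that all but finitely many $\gamma$ satisfy $\gamma(x,y,z) \in (\C \setminus \overline{\mathbb{D}_{2}})^3$, which is (BQ2).

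For the second containment I would show that $\AUTOGROUP$ acts properly discontinuously on the open set $V_{\rm BQ}$, from which normality follows at once. Given $p \in V_{\rm BQ}$, choose a neighborhood $U$ with $\overline{U}$ compact in $V_{\rm BQ}$; if the action is properly discontinuous then for every compact $K \subseteq S_0$ only finitely many $\gamma$ satisfy $\gamma(\overline{U}) \cap K \neq \emptyset$, so any sequence of distinct elements $\gamma_n$ diverges properly to infinity on $\overline{U}$, which is exactly one of the permitted modes of convergence in the definition of the Fatou set. To establish proper discontinuity I would exploit the Vieta/Markoff tree structure inside the region $\Omega = (\C \setminus \overline{\mathbb{D}_{2}})^3 \cap S_0$: on $S_0$ the two roots $z, z'$ of the defining equation in the last coordinate satisfy $z + z' = -xy$ and $zz' = x^2 + y^2$, so once all coordinates exceed $2$ in modulus the move $s_z(x,y,z) = (x,y,-z-xy)$ replaces one coordinate by one of comparable or larger size, and iterating longer reduced words in $s_x, s_y, s_z$ pushes points of $\Omega$ monotonically out along the tree with quantitative escape rates. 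Using the openness of $V_{\rm BQ}$ (Bowditch's Theorem 3.16) together with (BQ2), I would upgrade the pointwise escape along orbits to a uniform escape on the compact set $\overline{U}$, yielding the desired proper discontinuity.

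The main obstacle is precisely this last upgrade: passing from the pointwise condition (BQ2) on individual orbits to a \emph{uniform} escape estimate valid on an entire compact neighborhood, and organizing the tree combinatorics so that distinct group elements necessarily carry $\overline{U}$ out of every compact subset of $S_0$. The quantitative Vieta estimates in $\Omega$ are routine, but keeping the escape uniform across all branches of the tree, and ruling out accumulation of the images $\gamma(\overline{U})$ on a fixed compact part of $S_0$, is the delicate point, and it is here that the full strength of the Bowditch conditions—and the openness of $V_{\rm BQ}$—is needed.
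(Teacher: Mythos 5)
Your proof of the second containment has a genuine gap, and moreover it routes through a statement that is much stronger than what is needed. You propose to establish that $\AUTOGROUP$ acts properly discontinuously on $V_{\rm BQ}$ and to deduce normality from that. The deduction itself is sound (finitely many $\gamma$ meeting any compact set forces proper divergence to infinity, which is an admissible limit in the definition of $\mathcal{F}_0$), but proper discontinuity on $V_{\rm BQ}$ is a substantial theorem of Bowditch and of Tan--Wong--Zhang, and your Vieta/tree sketch does not prove it: as you yourself concede, the passage from the pointwise escape encoded in (BQ2) to a \emph{uniform} escape on a compact neighborhood is exactly the hard step, and it is left open. The paper avoids all of this. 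Its argument uses only (BQ1) and the openness of $V_{\rm BQ}$: on a small ball $\mathbb{B}_\epsilon(p_0)\cap S_0\subset V_{\rm BQ}$, each coordinate function $\pi_x\circ\gamma$ (and likewise for $y$ and $z$) takes values in $\C\setminus[-2,2]$ for every $\gamma\in\AUTOGROUP$, so Montel's Theorem makes each coordinate family normal, and hence $\AUTOGROUP$ itself is normal there. No proper discontinuity, no use of (BQ2), and no quantitative escape estimates are required.

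On the first containment, the paper simply cites Section 4 of Bowditch, so you are not expected to reprove it; your sketch is in the right spirit (loxodromic images of non-peripheral primitives give (BQ1)), but note a combinatorial slip in your bookkeeping for (BQ2): a slope is a vertex of \emph{infinitely} many Farey triangles --- it is the edges that lie in exactly two triangles --- so the claim that a fixed curve appears as a coordinate for only finitely many $\gamma$ is false as stated, and the finiteness argument has to be organized differently (this is precisely the content of Bowditch's Section 4). I would recommend keeping the citation for the first containment and replacing your proof of the second by the direct Montel argument.
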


\begin{proof}[Sketch of the proof]
The first containment is shown in Section 4 of \cite{bowditch}.
To see the second containment, 
suppose that $p_0 \in V_{\rm BQ}$.  Using that $V_{\rm BQ}$ is open there exists $\epsilon > 0$
such that $\mathbb{B}_\epsilon(p_0) \cap S_0 \subset V_{\rm BQ}$.   
Suppose $p \in \mathbb{B}_\epsilon(p_0) \cap S_0$.
By Condition BQ1 we have
that the $x$-coordinate of $\gamma(p)$ lies in $\C \setminus [-2,2]$ for any $\gamma \in \AUTOGROUP$.  Let $\pi_x(x,y,z) = x$
be the projection onto the first coordinate.  Montel's Theorem
then implies that $\pi_x \circ \AUTOGROUP$ forms a normal family on $\mathbb{B}_\epsilon(p_0) \cap S_0$. 
The same holds for $y$ and $z$ and therefore 
the action of $\AUTOGROUP$ on $\mathbb{B}_\epsilon(p_0) \cap S_0$ forms a normal family.   Since $p_0 \in V_{\rm BQ}$
was arbitrary we conclude that $V_{\rm BQ} \subseteq \mathcal{F}_0$.
\end{proof}

\noindent
{\bf Bowditch Conjecture:} $V_{\rm QF} = V_{\rm BQ}$.

\vspace{0.1in}
\noindent
As explained by Bowditch \cite[p. 702-703]{bowditch} $V_{\rm QF}$ corresponds to four connected components of
$V_{\rm BQ}$.   Therefore, a positive solution to the Bowditch conjecture would consist of proving
that there are no other connected components of $V_{\rm BQ}$.   Note also that part of the boundary of $V_{\rm BQ}$ is the boundary of $V_{\rm QF}$,
which is very irregular; see for example \cite[Appendix A]{mcmullen}.
See \cite[Conjecture A]{bowditch} and also \cite{STY,LEE_XU,series2} for recent related works and more details about the Bowditch Conjecture.

\vspace{0.1in}

\begin{question}\label{Q1}
Does $V_{\rm BQ} = \mathcal{F}_0$?
\end{question}

\noindent
This question serves as a kind of ``complement'' to the Bowditch conjecture.
Note that a positive answer to Question \ref{Q1} would, in part, require showing that there is no 
bounded Fatou component $U$ satisfying
\begin{itemize}
\item[(i)] $U \subset \mathbb{D}_{2}^2$ and
\item[(ii)] $U$ is stabilized by an infinite subgroup of $\AUTOGROUP$.
\end{itemize}

In view of Theorem A, an affirmative answer to Question \ref{Q1} would
immediately imply the existence of a point $p \in S_0 \setminus V_{\rm BQ}$ whose
orbit under $\AUTOGROUP$ is dense in $S_0 \setminus V_{\rm BQ}$.   Moreover, in
\cite[Corollary 5.6]{bowditch}, Bowditch proved that $(0,0,0)$ is an interior
point of $S_0 \setminus V_{\rm BQ}$.  Therefore the orbit closure of such a
point $p$ would contain an open neighborhood of $(0,0,0)$ in $S_0$.

A conditional step in this direction is the following result.

\begin{theorem}\label{THM:CONDITIONAL} Suppose that if $\gamma(q) = q$ for some $\gamma \in \AUTOGROUP \setminus \{\rm id\}$ then
$q \in \mathcal{J}_0$.   Then an open neighborhood of $(0,0,0)$ in $S_0$ is contained in $\mathcal{J}_0$.
In particular, there is a point $p \in \mathcal{J}_0$ whose orbit closure contains an open neighborhood of $(0,0,0)$ in $S_0$.
\end{theorem}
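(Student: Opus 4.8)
The plan is to use that the origin is a common fixed point of the entire group $\AUTOGROUP$ and to manufacture, arbitrarily close to it, points that are fixed by nontrivial elements; the hypothesis then places all of these in $\mathcal{J}_0$, and closedness of $\mathcal{J}_0$ upgrades this to a full neighborhood. First I would record the elementary structural facts at the origin. Since $s_x,s_y,s_z$ each fix $(0,0,0)$, so does every element of $\AUTOGROUP$, and the derivatives $Dg_x(0),Dg_y(0),Dg_z(0)$ equal $\mathrm{diag}(1,-1,-1)$, $\mathrm{diag}(-1,1,-1)$, $\mathrm{diag}(-1,-1,1)$, which generate only the Klein four-group. Thus the first-order action is finite and offers no hyperbolicity: the useful dynamics is hidden in the second-order terms, and the origin cannot be handled by a naive saddle argument.

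The essential observation is that $g_z$ preserves each affine slice $\{z=c\}$ and acts there \emph{linearly}, by the matrix $A_c=\left[\begin{smallmatrix}-1&-c\\ c&c^2-1\end{smallmatrix}\right]\in\SL(2,\C)$, of trace $c^2-2$. For every value $c=2\cos(\pi k/n)\in[-2,2]$ the eigenvalues of $A_c$ are roots of unity, so some power $g_z^{N}$ restricts to the identity on $\{z=c\}$ and therefore fixes the entire curve $S_0\cap\{z=c\}$ pointwise (the case $c=0$ is already visible: $g_z^2$ fixes the plane $\{z=0\}$, hence the two lines $S_0\cap\{z=0\}$). Since $g_z$ has infinite order in $\AUTOGROUP\cong\F_2$, these $g_z^{N}$ are nontrivial, and the admissible values of $c$ are dense in $[-2,2]$ and accumulate at $0$. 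Applying the hypothesis to each such curve and then using that $\mathcal{J}_0$ is closed, I obtain the ``real wall'' $W_z:=S_0\cap\{z\in[-2,2]\}\subseteq\mathcal{J}_0$, and by the symmetric argument with $g_x$ and $g_y$ the walls $W_x,W_y\subseteq\mathcal{J}_0$; all three pass through the origin.

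The three real walls alone have real codimension one, so the crux is to fill out a genuine complex neighborhood. For this I would use invariance and closedness of $\mathcal{J}_0$ together with the fact that the generators carry the slicing into complex heights: for instance $g_x$ sends a point of height $z=c$ to one of height $(x^2-1)c+xy$, so the conjugate $g_x\,g_z^{N}\,g_x^{-1}$ fixes the curve $g_x(S_0\cap\{z=c\})$, whose heights now sweep out complex values. Iterating, the orbit $\bigcup_{\gamma\in\AUTOGROUP}\gamma\bigl(W_x\cup W_y\cup W_z\bigr)$ consists entirely of fixed curves of nontrivial elements and hence lies in $\mathcal{J}_0$. The hard part is to show that this orbit is dense in some neighborhood of the origin in $S_0$ --- a local minimality statement for the action at the singular fixed point --- and I expect this to be the main obstacle. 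I would attack it by tracking how words in $g_x,g_y,g_z$ transport the walls (as in the height computation above) and by appealing to the dynamical richness of $\AUTOGROUP$ near the origin established in \cite{RR,cantat-1,cantat-2}. Once density is in hand, closedness of $\mathcal{J}_0$ immediately yields an open neighborhood of $(0,0,0)$ contained in $\mathcal{J}_0$.

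Finally, the concluding assertion is immediate from Theorem~A: choose $p\in\mathcal{J}_0$ whose $\AUTOGROUP$-orbit is dense in $\mathcal{J}_0$. Since the neighborhood just produced is contained in $\mathcal{J}_0=\overline{\AUTOGROUP\cdot p}$, the orbit closure of $p$ contains that neighborhood of the origin, as required.
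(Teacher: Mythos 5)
Your computation producing fixed curves is correct and does yield a genuine partial result: $g_z$ preserves each slice $\{z=c\}$ and acts there by the $\SL(2,\C)$ matrix with trace $c^2-2$, so for the dense set of $c=2\cos(\pi k/n)\in(-2,2)$ some power $g_z^{N}$ (nontrivial in $\AUTOGROUP$, which is free) fixes $S_0\cap\{z=c\}$ pointwise; the hypothesis and closedness of $\mathcal{J}_0$ then put the three real walls $S_0\cap\{x\in[-2,2]\}$, etc., inside $\mathcal{J}_0$. But the proof stops exactly at the decisive step. The walls have real codimension one in $S_0$, and your proposal to fill out an open neighborhood of the origin is to show that the $\AUTOGROUP$-orbit of $W_x\cup W_y\cup W_z$ is dense near $(0,0,0)$. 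That is a local minimality statement for the action at a fixed singular point whose linearization is a finite (Klein four-) group, and it is not established by the height bookkeeping you sketch; it is at least as hard as the theorem itself, and nothing in the sources you point to supplies it in the form you need. So as written there is a genuine gap, and it sits precisely where all the work is.

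The paper takes a completely different (and much shorter) route: Theorem~\ref{THM:CONDITIONAL} is deduced directly from Theorem G of \cite{RR}, after observing that the parameter restriction ``$(A,B,C,D)\notin\mathcal{H}$'' in that theorem is used only to guarantee that no Fatou point is fixed by a nontrivial element --- which is exactly the hypothesis assumed here. The engine behind Theorem G is not a web of explicitly fixed curves but the local \emph{non-discreteness} of $\AUTOGROUP$ on a neighborhood $U$ of the origin (\cite[Lemma 7.5]{RR}, via the non-linear Zassenhaus lemma): one produces sequences of non-commuting elements converging to the identity on $U$, so any Fatou component meeting $U$ would have a non-Abelian stabilizer, contradicting Theorem K of \cite{RR} once fixed points of nontrivial elements have been excluded from the Fatou set. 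If you want to salvage your approach, you would need either to prove the density of the orbit of the walls near the origin or to import the local non-discreteness mechanism --- at which point you have essentially reconstructed the paper's argument.
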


\noindent
This statement is an application of Theorem G from \cite{RR}, after noticing that the
hypothesis in that theorem that ``$(A,B,C,D) \not \in \mathcal{H}$'' is only used to rule out existence
of a point $q \in \mathcal{F}_0$ that is fixed by some $\gamma \in \AUTOGROUP \setminus \{\rm id\}$.

Independently of Theorem \ref{THM:CONDITIONAL}, the following question seems pertinent:

\begin{question}\label{Q2}
Does there exist $p \in S_0$ whose orbit closure contains an open neighborhood of $(0,0,0)$?
\end{question}

\noindent
Meanwhile, the hypothesis of Theorem \ref{THM:CONDITIONAL} leads to another question:

\begin{question}\label{Q3}
Is any point $q \in S_0$ that is fixed by some $\gamma \in \AUTOGROUP \setminus \{\rm id\}$ necessarily
in the Julia set?
\end{question}

\noindent
Let $q$ be a smooth point of $S_{0}$.  An immediate consequence of the
existence of the invariant volume form $\Omega$ (defined in Section
\ref{SEC:INV_FORM}) is that if $q$ is fixed by $\gamma \in \AUTOGROUP \setminus \{\rm id\}$
then ${\rm det}(D\gamma(q)) = 1$.  In particular, if $q$ is a hyperbolic fixed
point for $\gamma$, then it must be of saddle type.  
If $q$ is not hyperbolic then the two eigenvalues of $q$ are
reciprocal, thus leading to resonances for the problem of linearizing $\gamma$
at $q$.   This makes it rather unlikely that such a $q$ could be in
$\mathcal{F}$.   This is a well-know tricky issue that is explained at length
in the paper \cite{mcmullen2} by McMullen, see especially the remark on p. 220
of that paper.

\begin{question}\label{Q4}
Is the set of points $q$ for which there exists $\gamma \in \AUTOGROUP \setminus \{\rm id\}$
with $\gamma(q) = q$ and $D\gamma(q)$ a saddle dense in $\mathcal{J}_0$?
\end{question}

\noindent
Compare with Remark \ref{RMK_SADDLE_PTS} about the Picard Parameter.

\subsection{Locally Non-discrete/Locally Discrete decomposition of $S_0$}
The Locally Non-discrete /Locally Discrete decomposition for $S_0$ is simpler to
describe.  It was proved by Bowditch \cite{bowditch} and later by
Tan-Wong-Zhang \cite{TWZ} that the action of $\AUTOGROUP$ on $V_{\rm BQ}$ is
properly discontinuous.   In particular, $V_{\rm BQ}$ is contained in the
Locally Discrete Locus $\discrete_0$.   

Using a non-linear version of the Zassenhaus Lemma
\cite[Proposition 7.1]{RR}, whose idea dates back to Ghys \cite{GHYS}, we were
able to show that there is an open neighborhood $U$ of $(0,0,0)$ in $S_0$
contained in $\nondiscrete_0$.  (See \cite[Lemma 7.5]{RR}).   We summarize these two paragraphs
by:

\begin{proposition}
The subsets $\nondiscrete_0$ and $V_{\rm BQ}$ of $S_0$ are open, non-empty, disjoint, and $\AUTOGROUP$-invariant.
\end{proposition}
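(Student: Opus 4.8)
The plan is to verify the four listed properties for each of the two sets in turn, since every assertion either is immediate from a definition given in Section \ref{SEC:DISC} or follows from a result already quoted in the preceding discussion; the only place where the two sets genuinely interact is the disjointness, and it is there that the one substantive input enters.

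I would first dispatch $\nondiscrete_0$. By the very definition of the Locally Non-discrete Locus in Section \ref{SEC:DISC}, the set $\nondiscrete_0$ is open and invariant under $\AUTOGROUP_0$, so nothing needs to be proved for those two properties. Non-emptiness is exactly the content of the non-linear Zassenhaus argument recalled just above: \cite[Lemma 7.5]{RR} produces an open neighborhood $U$ of the origin $(0,0,0)$ with $U \subseteq \nondiscrete_0$, whence $\nondiscrete_0 \neq \emptyset$.

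Next I would treat $V_{\rm BQ}$. Its openness is Bowditch's \cite[Theorem 3.16]{bowditch}, already quoted. Non-emptiness follows from the containment $V_{\rm QF} \subseteq V_{\rm BQ}$ of Proposition \ref{PROP_CONTAINMENTS}, since the set $V_{\rm QF}$ arising from the Bers embedding is a nonempty open subset of $S_0$. For the $\AUTOGROUP$-invariance I would simply observe that both defining conditions are properties of the whole orbit: condition (BQ1) quantifies over all $\gamma \in \AUTOGROUP$ and is therefore unchanged upon replacing $(x,y,z)$ by $\gamma_0(x,y,z)$; and for (BQ2), the set of $\gamma$ with $\gamma\gamma_0(x,y,z) \notin (\C \setminus \overline{\mathbb{D}_{2}})^3$ is carried bijectively by $\gamma \mapsto \gamma\gamma_0$ onto the exceptional set for $(x,y,z)$, hence is finite precisely when the latter is. Thus $(x,y,z) \in V_{\rm BQ}$ if and only if $\gamma_0(x,y,z) \in V_{\rm BQ}$.

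Finally, the disjointness is where the real content lies. The plan is to use that the action of $\AUTOGROUP$ on $V_{\rm BQ}$ is properly discontinuous, by the theorems of Bowditch \cite{bowditch} and Tan--Wong--Zhang \cite{TWZ} recalled above, and to convert this into local discreteness: if $p \in V_{\rm BQ}$ and a sequence $f_n \neq \id$ in $\AUTOGROUP$ converged to $\id$ uniformly on a compact neighborhood $K \subseteq V_{\rm BQ}$ of $p$, then the $f_n$ would take infinitely many distinct values (any value occurring infinitely often would equal the limit $\id$) while satisfying $f_n(K) \cap K \neq \emptyset$ for all large $n$, contradicting proper discontinuity. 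Hence $V_{\rm BQ} \subseteq \discrete_0 = S_0 \setminus \nondiscrete_0$, so $V_{\rm BQ} \cap \nondiscrete_0 = \emptyset$. I expect this last implication to be the only genuine obstacle: openness, non-emptiness, and invariance are all bookkeeping on top of the quoted statements, whereas the disjointness rests on the deep properly-discontinuous-action results together with the short but essential observation that such an action cannot be locally non-discrete.
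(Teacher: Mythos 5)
Your proposal is correct and follows essentially the same route as the paper, which simply summarizes the two preceding paragraphs: non-emptiness of $\nondiscrete_0$ via the non-linear Zassenhaus Lemma (\cite[Lemma 7.5]{RR}), non-emptiness and openness of $V_{\rm BQ}$ via $V_{\rm QF}\subseteq V_{\rm BQ}$ and Bowditch's theorem, invariance from the definitions, and disjointness because proper discontinuity of the $\AUTOGROUP$-action on $V_{\rm BQ}$ forces $V_{\rm BQ}\subseteq \discrete_0$. Your explicit derivation that a properly discontinuous action cannot be locally non-discrete (distinct $f_n$ with $f_n(K)\cap K\neq\emptyset$) is exactly the step the paper leaves implicit in its ``In particular.''
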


\section{Arbitrary Parameters $(A,B,C,D)$}
\label{SEC:ARBITRARYPARAMS}
For arbitrary parameters $(A,B,C,D) \in \C^4$, the setup explained in Sections \ref{SUBSEC:BACKGROUND_AND_DEFS} and \ref{SEC:INV_FORM} 
remains almost the same, with the mappings $s_x, s_y$ and $s_z: S_{A,B,C,D} \rightarrow S_{A,B,C,D}$ being defined geometrically using lines parallel to the coordinate axes, as before.   However the equations for the mappings now depend on $A,B,$ and $C$:
\begin{align*}
s_x\left(x, y, z\right) &= \left(-x-yz+A,  y, z \right), \\ 
s_x\left(x, y, z\right) &= \left(x, -y-xz + B, z \right), \quad \mbox{and} \\
s_z\left(x, y, z\right) &= \left(x, y, -z-xy + C \right).
\end{align*}
One then defines 
$\AUTOGROUP_{A,B,C,D}^\pm = \langle s_x, s_y, s_z\rangle \leq {\rm Aut}(S_{A,B,C,D})$ and $\AUTOGROUP_{A,B,C,D} = \langle g_x, g_y, g_z\rangle \leq \AUTOGROUP_{A,B,C,D}^\pm$ with $g_x = s_z \circ s_y$, $g_y = s_x \circ s_z$, and $g_z = s_y \circ s_x$, as in Section \ref{SUBSEC:BACKGROUND_AND_DEFS}.

The invariant holomorphic two-form also now depends on $A,B,$ and $C$:
\begin{align*}
\Omega = \frac{dx \wedge dy}{2z+xy-C} = \frac{dy \wedge dz}{2x+yz-A} = \frac{dz \wedge dx}{2y + zx-B}.
\end{align*}
The definitions of the Fatou set $\mathcal{F}_{A,B,C,D}$, the Julia set $\mathcal{J}_{A,B,C,D}$, the Locally Non-discrete set $\nondiscrete_{A,B,C,D}$, and the Locally Discrete set $\discrete_{A,B,C,D}$ are exactly the same as in Sections \ref{SEC:FATOU_JULIA} and \ref{SEC:DISC}.

The case of arbitrary parameters also arises naturally from the dynamics on
character varieties except that one now considers the ${\rm SL}(2,\mathbb{C})$
character variety of the sphere $\mathbb{S}^2_4$ with four punctures (holes).
For a fixed choice of parameters $(A,B,C,D)$ the surface $S_{A,B,C,D}$
corresponds to the relative character varieties with prescribed traces
``assigned to" the small loops around each of the four punctures, analogous to
how  in Section \ref{SEC:CHARACTERDYNAMICS} we ``assigned trace'' $\tau$ to the
unique puncture in~$\mathbb{T}_0$.  (Note that there is a small ambiguity here
since the the parameters $(A,B,C,D)$ do not uniquely determine these four traces; see Equations (\ref{For-A_B_C}) and (\ref{For-D_finally})
below.
Therefore, for a given choice of $(A,B,C,D)$ the surface $S_{(A,B,C,D)}$ corresponds to the character
varieties for up to $24$ different choices of the traces; see \cite[Section 1.4]{cantat-2}.)

The dynamics is induced by the subgroup
${\rm Mod}^\pm_0(\mathbb{S}^2_4)$ of the extended mapping class group ${\rm
Mod}^\pm(\mathbb{S}^2_4)$ consisting of homeomorphisms that preserve each of
the punctures.  There is also an isomorphism from the matrix group
$\MATRIXGP^\pm_2$ to the automorphism group $\AUTOGROUP_{A,B,C,D}^\pm$, which
we will denote by $f \mapsto f_M$, analogous to
Proposition~\ref{PROP:DERIVINGSXSYSZ}.   It is obtained by observing that ${\rm
PGL}(2,\mathbb{Z})$ (and hence $\MATRIXGP^\pm_2$) naturally embeds in ${\rm
Mod}^\pm(\mathbb{S}^2_4)$ and doing similar calculations to those done in the
proof of Proposition~\ref{PROP:DERIVINGSXSYSZ}.  Rather than elaborating
further, we refer the reader to \cite{cantat-2} or \cite{MPT} for more details.

Below we ask several questions about the dynamics of $\AUTOGROUP_{A,B,C,D}$ acting on $S_{A,B,C,D}$ at arbitrary parameters.  They are all organized around the informal question:

\vspace{0.1in}
\noindent
{\em If $(A,B,C,D)$ is not equal to the Picard Parameter $(0,0,0,4)$, to what extent is the dynamics of $\AUTOGROUP_{A,B,C,D}$ acting on $S_{A,B,C,D}$
similar to that for the Markoff Parameter $(0,0,0,0)$?}

\subsection{Bowditch Set:}
Maloni, Palesi, and Tan \cite{MPT} extend the definition of the Bowditch Set to the case
of arbitrary parameters $(A,B,C,D)$ (i.e.\ to the case of the character variety of the four holed sphere)  by adapting the BQ conditions
as follows:
\begin{itemize}
\item[(BQ1)]  None of the coordinates of $\gamma(x,y,z)$ is in $[-2,2]$  for any $\gamma \in \AUTOGROUP$, and
\item[(BQ2)]  $\gamma(x,y,z) \in \Big(\mathbb{C} \setminus \overline{\mathbb{D}_{K}}\Big)^3$ for all but finitely many $\gamma \in \AUTOGROUP$.   
\end{itemize}
Here, $K \equiv K(A,B,C,D)$ is a constant that is made precise in \cite[Definition 3.9]{MPT}.  
Note that the exact value of $K(A,B,C,D)$ does not really matter because replacing it with a larger value will result in the same set $V_{\rm BQ}(A,B,C,D)$.  Note also that in the case of the punctured torus parameters one can use $K(0,0,0,D) = 2$ for 
any $D \in \C$. 

It is proved in \cite{TWZ} and  \cite{MPT} that for any $(A,B,C,D)$ the set
\begin{align*}
V_{\rm BQ} = \{(x,y,z) \in S_{A,B,C,D} \, : \, \mbox{Conditions BQ1 and BQ2  hold}\}
\end{align*}
is open and that $\AUTOGROUP_{A,B,C,D}$ acts properly discontinuously on it.  Hence $V_{\rm BQ}(A,B,C,D)  \subseteq \discrete_{A,B,C,D}$.
The same proof as in Proposition \ref{PROP_CONTAINMENTS} also gives that $V_{\rm BQ}(A,B,C,D) \subseteq
\mathcal{F}_{A,B,C,D}$ for any $(A,B,C,D)$. 

It was proved in \cite{TWZ},  \cite{MPT}, and \cite{GMST} that $V_{\rm BQ}(A,B,C,D) \neq
\emptyset$ for certain families of real (and also purely imaginary) parameters.
It was proved in \cite[Theorem E]{RR} that $V_{\rm BQ}(A,B,C,D)~\neq~\emptyset$ for an open neighborhood of $\{(0,0,0,D) \in \mathbb{C}^4 \, : \, D
\neq 4\}$ in $\mathbb{C}^4$.  In particular, $V_{\rm BQ} \neq \emptyset$ for all punctured torus parameters other than the
Picard Parameter \cite[Corollary to Theorem E]{RR}.
We therefore ask the following question:

\begin{question}\label{Q5}
Is $V_{\rm BQ}(A,B,C,D) \neq \emptyset$ for all $(A,B,C,D) \in \C^4 \setminus \{(0,0,0,4)\}$?
\end{question}

\subsection{Locally Non-Discrete Set}
\begin{question}\label{Q6}
Is the Locally Non-Discrete set $\nondiscrete_{A,B,C,D} \neq \emptyset$ for all $(A,B,C,D) \in \C^4 \setminus \{(0,0,0,4)\}$?
\end{question}

\noindent
In \cite[Theorem F]{RR} we prove that there is an open neighborhood
$\mathcal{P} \subset \C^4$ of the Markoff Parameter $(0,0,0,0)$ and also of
the ``Dubrovin-Mazocco Parameters'' (see \cite[Section 1.2]{RR} for the
definition) such that $\nondiscrete_{A,B,C,D} \neq \emptyset$ for all
$(A,B,C,D) \in \mathcal{P}$.  However, it seems likely that $\nondiscrete_{A,B,C,D} \neq \emptyset$ for a much wider range of parameters.

\subsection{Restating Questions 1-4}
These questions were phrased for the Markoff Parameter $(0,0,0,0)$ in order to highlight them in a case of particular interest.  However, they are all relevant to an arbitrary choice of parameters $(A,B,C,D) \neq (0,0,0,4)$.   In particular, let us ask

\begin{question}\label{Q7}
For any choice of parameter $(A,B,C,D) \in \C^4$,
\begin{itemize}
\item[(a)]  does there exist a point $p$ whose orbit closure in $S_{A,B,C,D}$ has
non-empty interior?
\item[(b)] does the Julia set $\mathcal{J}_{A,B,C,D}$ have non-empty interior?
\end{itemize}
Note that an affirmative answer to {\rm (b)} implies an affirmative answer to {\rm (a)} by Theorem A.
\end{question}

\noindent
We also ask:

\begin{question}\label{Q8}
Can there be a bounded Fatou component for the action of $\AUTOGROUP_{A,B,C,D}$ on $S_{A,B,C,D}$?
\end{question}

\begin{remark}
A bounded Fatou component $V$ for the action of $\AUTOGROUP_{A,B,C,D}$ on
$S_{A,B,C,D}$ cannot be invariant under the whole group $\AUTOGROUP_{A,B,C,D}$
because of the classification of points with bounded orbits given in
\cite[Theorem C]{cantat-2}.  In that theorem Cantat and Loray prove that if $p \in
S_{A,B,C,D}$ has infinite but bounded orbit then $A,B,C,D \in \mathbb{R}$ and
the orbit lies in the real part of the surface $S_{A,B,C,D}(\mathbb{R})$.

Said differently, if $V$ is a bounded Fatou component then its stabilizer
$\AUTOGROUP_V$ must be a proper subgroup of $\AUTOGROUP_{A,B,C,D}$.  More is known: so
long as $V$ does not contain a fixed point of any element of $\AUTOGROUP_{A,B,C,D}
\setminus \{\rm id\}$ then $\AUTOGROUP_V$ must be Abelian (see Theorem K from
\cite{RR} and its proof).   The domain $V$ then wanders under the action of the quotient
$\AUTOGROUP / \AUTOGROUP_V$, with each of the distinct images relatively compact in $S_{A,B,C,D}$.
\end{remark}

The inability to rule out the existence of bounded Fatou components at the
level of generality posed in Question \ref{Q8} is the reason why we did a
``parameter selection" to eliminate countably many hypersurfaces $\mathcal{H}
\subset \C^4$ in the statements of \cite[Theorems G and K]{RR}.   It was done
precisely so that if $(A,B,C,D) \in  \C^4 \setminus \mathcal{H}$ then any fixed
point of any $\gamma \in \AUTOGROUP_{A,B,C,D} \setminus \{\rm id\}$ is in the
Julia set.   Under this hypothesis,  we could then use Theorem K to rule out
bounded Fatou components on a specific open subset $U \subset S_{A,B,C,D}$ on
which we constructed sequences of non-commuting elements of
$\AUTOGROUP_{A,B,C,D}$ converging locally uniformly to the identity.
Existences of such elements would prove that the stabilizer $\AUTOGROUP_V$ of a
hypothetical Fatou component $V$ is non-Abelian, thus contradicting the conclusion
of Theorem K.

\subsection{Invariant Boundary}
There is an open neighborhood $\parameterneighborhood \subset \mathbb{C}^4$ of $(0,0,0,0)$ such that
for every choice of parameter $(A,B,C,D) \in \parameterneighborhood$ both 
$\nondiscrete_{A,B,C,D}$ and $V_{\rm BQ}(A,B,C,D)$ are non-empty.
In particular, there is a set
\begin{align*}
\ourboundary_{A,B,C,D} \subset \partial \nondiscrete_{A,B,C,D} = \partial \discrete_{A,B,C,D}
\end{align*}
that has topological dimension equal to three and is invariant under $\AUTOGROUP_{A,B,C,D}$.
We refer the reader to \cite{FRACTALS} for the definition of topological dimension (used above) and Hausdorff dimension
(used in Question \ref{Q9}, below).

The invariant set $\ourboundary_{A,B,C,D} \subset S_{A,B,C,D}$ of topological
dimension~$3$ ``persists'' over the open subset of parameters
$\parameterneighborhood \subset \mathbb{C}^4$.  The existence of persistent
invariant sets of topological dimension~$3$ for the action of a large group
($\AUTOGROUP$ is free on two generators) strongly hints at a fractal nature for
$\ourboundary_{A,B,C,D}$.

\begin{proposition}\label{PROP:NONDISCRETEREPS}
In any neighborhood 
$W \subset \mathbb{C}^4$ of $(0,0,0,0)$ there are parameters $(A,B,C,D)$
such that the corresponding surface $S_{A,B,C,D}$ contains no (equivalence class of a) discrete representation.
\end{proposition}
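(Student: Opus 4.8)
The plan is to exploit the classical fact that a discrete subgroup of $\SL(2,\C)$ contains no elliptic element of infinite order: if $g\in\SL(2,\C)$ has $\tr(g)=2\cos(\pi\alpha)$ with $\alpha$ irrational, then $g$ is elliptic of infinite order and $\langle g\rangle$ accumulates at the identity. The key structural point is that on a relative character variety the traces of the four boundary loops are \emph{fixed constants}, determined by the parameters. Hence, if for a suitable $(A,B,C,D)$ one of these boundary traces equals $2\cos(\pi\alpha)$ with $\alpha$ irrational, then the corresponding boundary holonomy is elliptic of infinite order for \emph{every} representation lying on $S_{A,B,C,D}$, and no such representation can be discrete. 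The whole proof is an attempt to realize this mechanism at parameters arbitrarily close to $(0,0,0,0)$.

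First I would recall the Fricke-type relations expressing the parameters in terms of the four boundary traces $(a,b,c,d)$, which (after suitably labelling the punctures) take the form $A=ab+cd$, $B=ad+bc$, $C=ac+bd$, together with a degree-four expression for $D$; see \cite[Section 1.4]{cantat-2}. These exhibit $(A,B,C,D)$ as a finite map $\Pi\colon \C^4\to\C^4$ of generic degree $24$, which is exactly the source of the ``up to $24$'' ambiguity noted in the introduction: a single surface $S_{A,B,C,D}$ is simultaneously the relative character variety for each tuple in the fibre $\Pi^{-1}(A,B,C,D)$. I would then compute the fibre over the Markoff parameter $(0,0,0,0)$ explicitly and record that every tuple in it has all entries in the compact interval $[-2,2]$; representative tuples are $(0,0,0,\pm2)$ and $(\sqrt{2},\sqrt{2},\sqrt{2},-\sqrt{2})$, together with their permutations and sign changes. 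The decisive observation is that no tuple in this fibre has all four entries equal to $\pm2$, so each branch of $\Pi^{-1}$ lying over $(0,0,0,0)$ keeps at least one trace strictly inside the open interval $(-2,2)$, bounded away from the endpoints.

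Next I would perturb along the explicit one-parameter family of boundary traces $(a,0,0,2)$, which maps under $\Pi$ to $(A,B,C,D)=(0,2a,0,-a^{2})$; thus every small $a$ produces parameters inside the prescribed neighbourhood $W$ of the origin. Along this family each branch of the fibre carries a trace that is a non-constant algebraic function of $a$ taking values in $(-2,2)$: for the branch containing our perturbed trace this function is $a$ itself, and for the remaining branches (for instance the solutions $(\pm\sqrt{2-a},\pm\sqrt{2+a},\pm\sqrt{2+a},\mp\sqrt{2-a})$ that one finds by solving the system) it is one of the quantities $\sqrt{2\pm a}$. Since the values $2\cos(\pi r)$ with $r\in\Q$ are countable and each such branch-trace is non-constant, each branch fails to carry an infinite-order elliptic trace only for countably many $a$. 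Discarding this countable exceptional set still leaves a dense set of admissible $a$ in $W$; choosing any such $a$ with $a=2\cos(\pi\alpha)$, $\alpha$ irrational, makes \emph{every} branch of the fibre carry an elliptic trace of infinite order, so no representation on $S_{A,B,C,D}$ is discrete.

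The main obstacle is precisely this need to control the entire fibre $\Pi^{-1}(A,B,C,D)$ rather than one preferred tuple: ruling out discrete representations requires that \emph{every} compatible choice of boundary traces contain an infinite-order elliptic entry. The hardest point to secure is that no branch of $\Pi^{-1}$ over parameters near the origin degenerates into a tuple with all four traces leaving $[-2,2]$, since such all-loxodromic boundary data could a priori support a quasi-Fuchsian, hence discrete, representation against which the elliptic argument gives no leverage. This is exactly what the explicit boundedness of the Markoff fibre is meant to exclude: because each branch over $(0,0,0,0)$ retains a trace strictly inside $(-2,2)$, that trace stays elliptic under small real perturbations, and the countable-exceptional-set argument then applies uniformly across the finitely many branches. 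Assembling these steps yields, in every neighbourhood $W$ of $(0,0,0,0)$, a parameter $(A,B,C,D)$ whose surface $S_{A,B,C,D}$ carries no discrete representation.
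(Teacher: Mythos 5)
Your proposal is correct and uses the same mechanism as the paper's proof: every tuple of boundary traces lying over the Markoff parameter keeps at least one entry strictly inside $(-2,2)$, so a small perturbation makes some boundary holonomy elliptic with irrational rotation number, which rules out discreteness for every representation on the surface. The paper in fact leads with an even shorter version of this argument using only the punctured-torus family $S_{0,0,0,D}$, where the single boundary trace $D-2$ is unambiguous (no $24$-fold fibre to control), and then sketches the four-punctured-sphere variant that you carry out in somewhat greater detail.
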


\begin{proof}	
A simple way to achieve this is to consider the restricted family $S_{0,0,0,D} \cong \chi_{-2+D}$
and to note that the $D = \tr(\rho(aba^{-1}b^{-1})) + 2$ where the commutator
$aba^{-1}b^{-1}$ corresponds to the class in $\pi_1(\mathbb{T}_0)$ of a small
simple closed loop around the unique puncture (hole) in $\mathbb{T}_0$.  As we increase $D$ 
from $0$ to $\epsilon > 0$ we have that $\tr(\rho(aba^{-1}b^{-1}))$ passes
through values from $-2$ to $-2 + \epsilon$.  For such values of $D$ the
corresponding element $\rho(aba^{-1}b^{-1}) \in \SL(2,\mathbb{R})$ will be
elliptic and it will have an irrational rotation number for almost every such
$D \in [0,\epsilon]$.   For these values of $D$ the representation $\rho$ will be non-discrete for for any choice
of $[\rho] \in \chi_{-2+D} \cong S_{0,0,0,D}$, i.e. for any $(x,y,z) \in  S_{0,0,0,D}$.

If one desires parameters $(A,B,C,D)$ with $A,B,C$ not all equal to $0$ one can do a similar procedure using the interpretation of $S_{A,B,C,D}$
as the character variety of the quadruply punctured sphere.
Given a representation from the fundamental group of the quadruply punctured sphere in $\SL(2,\C)$, let $a_1, \ldots, a_4$ denote the traces
of the matrices associated with small loops around each of the punctures. These traces determine the parameters $A,B,C,D$ by means
of the formulae
\begin{equation}
A = a_1 a_4 + a_2a_3 \, , \; \; B = a_2a_4 + a_1 a_3 \, , \; \; C = a_3a_4 + a_1 a_2 \,  , \label{For-A_B_C}
\end{equation}
and
\begin{equation}
D = 4 - [a_1a_2a_3a_4 + a_1^2 + a_2^2 + a_3^2 + a_4^2 ] \, . \label{For-D_finally}
\end{equation}
It can also be shown that arbitrarily close to the Markoff Parameter $(A,B,C,D) = (0,0,0,0)$ there are
surfaces $S_{A,B,C,D}$ containing no discrete representations. In fact, representations lying in $S_{0,0,0,0}$ are of two
possible natures: either $3$ of the parameters $a_1, \ldots ,a_4$ are zero and the fourth one takes on the values $\pm 2$ or they
are all equal to $\pm \sqrt{2}$ (three of them being equal and the fourth one taking on the value with opposite sign). In particular
they are again generated by elliptic and parabolic elements. Thus by suitably choosing $A,B,C,D$ arbitrarily close to~$0$, we can
ensure that the corresponding representations contain an elliptical element with irrational rotation number corresponding
to the homotopy class of a small loop around at least one of the punctures. These representations will be non
discrete, regardless of the values of $(x,y,z) \in S_{A,B,C,D}$.
\end{proof}

In the case of the Markoff Parameter $(0,0,0,0)$ we have that $V_{\rm QF}
\subseteq \discrete_{(0,0,0,0)}$, with both of them invariant under $\AUTOGROUP_{(0,0,0,0)}$, but we do not know if their boundaries
coincide.  The boundary of $V_{\rm QF}$ is invariant under $\AUTOGROUP_{(0,0,0,0)}$ and is known to be very irregular; see for
example \cite[Appendix A]{mcmullen}.  The point of Proposition
\ref{PROP:NONDISCRETEREPS}, above, is to show that the existence of the
``separating sets'' $\ourboundary_{A,B,C,D}$ and their study lies beyond the
reach of Teichm\"uller theory.   We therefore raise the following question:

\begin{question}\label{Q9}
Does there exist a parameter 
$(A,B,C,D) \in \parameterneighborhood$ such that $S_{A,B,C,D}$ contains no Quasi-Fuchsian representations
and for which the Hausdorff Dimension of $\ourboundary_{A,B,C,D}$ is strictly greater than three?
\end{question}

\noindent
Remark that we do not know of any choice of parameter $(A,B,C,D)$ for which $\ourboundary_{A,B,C,D}$ is smooth.

\subsection{Infinite Index Subgroups}
Recall the isomorphism from $\MATRIXGP^\pm$ to $\AUTOGROUP^\pm$ given by $M \mapsto f_M$ that was described in
Section \ref{SEC:PICARD}.   An element $f_M \in \AUTOGROUP$ is called {\em elliptic, parabolic,} or {\em hyperbolic} (respectively)
if the two-by-two matrix $M$ generates an {\em elliptic, parabolic,} or {\em hyperbolic} (respectively) isometry of the hyperbolic plane $\mathbb{H}$.
For any $(A,B,C,D)$ the group $\AUTOGROUP_{A,B,C,D}$ has the parabolic elements
$g_x, g_y$, and $g_z$ that preserve the fibrations $x={\rm const}$, $y={\rm
const}$, and $z={\rm const}$ respectively.   Moreover, the dynamics of each of
these individual mappings is rather easy to understand and it plays a crucial
role in several of the proofs from \cite{RR}.  (This is especially the case for
Theorems A, B, and C from that paper.)

\begin{question}\label{Q10}
Suppose that $\Upsilon_{A,B,C,D} < \AUTOGROUP_{A,B,C,D}$ is a subgroup that is not
solvable and such that every $\gamma \in \Upsilon_{A,B,C,D} \setminus \{\rm id\}$ is hyperbolic.
\begin{itemize}
\item[(i)] What can be said about the Fatou/Julia and also the Locally Non-Discrete/Locally Discrete dichotomies for
$\Upsilon_{A,B,C,D}$?   
\item[(ii)] What can be said about the closures of $\Upsilon_{A,B,C,D}$ orbits of points $p \in S_{A,B,C,D}$?   Can one
obtain an orbit closure having non-empty interior?
\end{itemize}
\end{question}

\appendix

\section{Proof of Proposition \ref{PROP:DERIVINGSXSYSZ}}
\label{SEC:DERIVINGSXSYSZ}

We have learned the material in this section directly from 
the papers \cite{GOLDMAN,goldman-1,cantat-2,cantat-1}.    However, we hope that
a brief recollection of it here can be useful for the reader.

\begin{proposition}\label{PROP:TRACE_RELATIONS}
For any matrices $M,N \in \SL(2,\mathbb{C})$ we have:
\begin{itemize}\label{PROP:TRACES}
\item[(1)] $\tr M = \left(\tr M^{-1}\right)$
\item[(2)] $\tr \left(MN \right) = \tr\left(NM \right)$, and
\item[(3)] $\tr \left(MN\right) = \left(\tr M\right) \left(\tr N\right) - \tr \left(M N^{-1}\right)$.
\end{itemize}
\end{proposition}
\noindent
We refer the reader to \cite[Section 2.2]{GOLDMAN} for a proof.

\begin{proof}[Proof of Proposition \ref{PROP:DERIVINGSXSYSZ}]
Let us begin with $M_z$ for which the computation is slightly easier.  It corresponds to $\phi \in \Aut(\F_2)$ given by
$\phi(a,b) = (a,b^{-1}) = \phi^{-1}(a,b)$.  We have
\begin{align*}
f_\phi(x,y,z) &= f_\phi \big(\tr(\rho(a)), \tr(\rho(b)), -\tr(\rho(ab)\big)  \\
 &= \big(\tr(\rho(\phi^{-1}(a))), \tr(\rho(\phi^{-1}(b))), -\tr(\rho(\phi^{-1}(ab))\big)   \\
 &= \big(\tr(\rho(a)), \tr(\rho(b^{-1})), -\tr(\rho(ab^{-1}))\big)  \\
 &= \big(\tr(\rho(a)), \tr(\rho(b)), -\tr(\rho(a)) \tr(\rho(b)) + \tr(\rho(ab))\big)  \\
& = \big(x,y,-xy-z\big) = s_z(x,y,z).
\end{align*}
Note that we have used relation (3) from Proposition \ref{PROP:TRACE_RELATIONS} to get from line 3 to line 4 of this equation.

The matrix $M_x$ corresponds to $\phi \in \Aut(\F_2)$ given by
$\phi(a,b) = (a^{-1} b^{-2},b) = \phi^{-1}(a,b)$.  
We have:
\begin{align*}
f_\phi(x,y,z) &= f_\phi \big(\tr(\rho(a)), \tr(\rho(b)), -\tr(\rho(ab)\big)  \\
 &= \big(\tr(\rho(\phi^{-1}(a))), \tr(\rho(\phi^{-1}(b))), -\tr(\rho(\phi^{-1}(ab))\big)   \\
 &= \big(\tr(\rho(a^{-1} b^{-2})), \tr(\rho(b)), -\tr(\rho(a^{-1} b^{-2} b))\big). \\
 &= \big(-yz - x,y,z\big) = s_x(x,y,z).
\end{align*}
To explain the last equal sign let $A = \rho(a)$ and $B = \rho(b)$ and note that
\begin{align*}
\tr \left(A^{-1}B^{-2}\right) &= \left(\tr(A^{-1}B^{-1})\right) \left(\tr B^{-1}\right) - \tr \left(A^{-1}B^{-1} B \right) = \left(\tr(B^{-1}A^{-1})\right)\left(\tr B^{-1}\right) - \tr(A^{-1}) \\ &= \tr \left(AB\right)\tr(B) - \tr(A) = -zy - x \qquad \mbox{and} \\
\tr(A^{-1}B^{-1}) &= \tr(B^{-1}A^{-1}) - \tr((AB)^{-1}) = \tr(AB) = z.
\end{align*}
The calculation for $M_y$ is quite similar and left to the reader.
\end{proof}

\section{Fatou set for the Markoff Parameter}
\label{SEC:FATOUCOMPS}

\begin{proposition} The Fatou set ${\mathcal F}_0$ is non-empty for the Markoff Parameter $D=0$.
\end{proposition}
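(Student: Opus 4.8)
The plan is to produce an \emph{explicit} point $p \in \mathcal{F}_0$ together with a concrete open neighborhood on which $\AUTOGROUP_0$ is a normal family, thereby avoiding any appeal to the theory of quasi-Fuchsian representations. The cleanest route is to exhibit a point satisfying the Bowditch conditions (BQ1) and (BQ2) directly, since Proposition \ref{PROP_CONTAINMENTS} already shows $V_{\rm BQ} \subseteq \mathcal{F}_0$; but to keep the proof genuinely lightweight I would instead argue the normal-family property from scratch using a quantitative version of the ``escape to infinity'' estimate. The guiding idea, which goes back to Bowditch, is that if all three coordinates of a point on $S_0$ are large in modulus then the generators $g_x, g_y, g_z$ (and hence all of $\AUTOGROUP_0$) only make them larger, so the orbit escapes properly to infinity in a controlled way.

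The key steps, in order, are as follows. First I would fix a candidate point, for instance $p_0 = (x_0, y_0, z_0)$ with each coordinate real and very large (the constraint $x^2 + y^2 + z^2 + xyz = 0$ forces a relation, so in practice I would take, say, $x_0 = y_0 = t$ and solve for $z_0$ as $t \to \infty$). Second, I would establish a \emph{trapping estimate}: there is a radius $R > 2$ and a region $\Delta_R = \{(x,y,z) \in S_0 : |x|,|y|,|z| > R\}$ (or a suitable connected component thereof) such that each generator maps $\Delta_R$ into itself and strictly increases the smallest coordinate modulus. The central computation is that the involution $s_z(x,y,z) = (x,y,-z-xy)$ sends $z$ to $-z - xy$, and when $|x|,|y|$ are large and of appropriate sign this dominates $z$; the analogous statements hold for $s_x$ and $s_y$. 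Third, from the trapping estimate I would deduce that any sequence $(\gamma_n) \subset \AUTOGROUP_0$ of distinct elements, applied to points in a small neighborhood $U$ of $p_0$ inside $\Delta_R$, drives at least one coordinate to infinity, so that for every compact $K \subset S_0$ only finitely many $\gamma_n(U)$ meet $K$. This is precisely the ``diverges properly to infinity'' condition in the definition of normality from Section \ref{SEC:FATOU_JULIA}, giving $p_0 \in \mathcal{F}_0$.

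The technical heart, and the step I expect to be the main obstacle, is the trapping estimate in the second step. The difficulty is that the three generators act on three different coordinates, and a careless bound only controls one coordinate at a time while the other two might temporarily shrink; I need a Lyapunov-type quantity — for example $\min(|x|,|y|,|z|)$, or better a weighted product — that is provably monotone under \emph{every} generator simultaneously on the trapping region. Getting the signs and the radius $R$ to cooperate for all of $s_x, s_y, s_z$ at once, while respecting the surface equation $x^2+y^2+z^2+xyz=0$, requires care: on $S_0$ the coordinates cannot be chosen independently, so I must verify that the starting region $\Delta_R \cap S_0$ is nonempty and that the monotonicity survives the constraint. Once the right monotone quantity is identified, the remaining estimates are elementary, and normality follows by the reasoning already used in the sketch of Proposition \ref{PROP_CONTAINMENTS}: properness of the escape is exactly the ``no subsequential limit inside $S_0$'' that Montel-type normality permits for noncompact targets.
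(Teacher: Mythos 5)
Your overall strategy coincides with the paper's: take an explicit point of $S_0$ with all coordinates of large modulus, show that the group action can only increase the moduli, and conclude normality coordinate-by-coordinate. But the step you yourself identify as the technical heart is genuinely missing, and the mechanism you propose for it --- a region $\Delta_R=\{(x,y,z)\in S_0 : |x|,|y|,|z|>R\}$ mapped into itself by each generator, together with a Lyapunov quantity such as $\min(|x|,|y|,|z|)$ monotone under every generator --- fails as stated. On $S_0$ the two points of the surface lying on a line parallel to the $z$-axis are the roots of $z^2+(xy)z+(x^2+y^2)=0$, so they satisfy $z z'=x^2+y^2$. Taking $x=y=t$ large, one root is $z\approx -t^2+2$ and the other is $z'\approx -2$; the point $(t,t,-t^2+2)$ lies in $\Delta_R\cap S_0$ for every $R<t$, yet $s_z$ (and likewise $g_y=s_x\circ s_z$) sends it to a point with $|z|\approx 2$, outside $\Delta_R$ for any $R>2$. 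Thus no coordinate-wise trapping region is invariant under all the generators, and no Lyapunov function of the kind you describe can be monotone on such a region; the obstruction is not a matter of tuning $R$ or the signs, since whether an involution expands or contracts the relevant coordinate depends on the history of the orbit, not on membership in a region.

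The missing idea --- and the one the paper's proof supplies --- is to replace region-invariance by an induction along \emph{reduced} words from a fixed starting ball. The paper takes $p_0=(-3,-3,-3)\in S_0$ and the ball $\mathbb{B}_{1/4}(p_0)$, and shows that for every reduced word $w_k\cdots w_1$ in $s_x,s_y,s_z$, each coordinate modulus of $w_k\cdots w_1(p)$ is at least that of $w_{k-1}\cdots w_1(p)$. The argument is by minimal counterexample: if the first decrease occurs at step $j+1$, with say $w_j=s_x$ and $w_{j+1}=s_y$ (distinct because the word is reduced), then the relations $x+x'=-y'z'$ and $y'+y''=-x'z'$ combined with $|x|\le|x'|$ and $|y''|<|y'|$ give $2|x'|\ge|y'z'|$ and $2|y'|>|x'z'|$, and comparing $|x'|$ with $|y'|$ in either order forces $|z'|<2$, contradicting $|z'|>11/4$. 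Both ingredients absent from your plan --- the role of the base point and the fact that the same involution is never applied twice in a row --- are essential here. A final smaller point: monotonicity alone does not yield proper divergence to infinity, but it is not needed; once every coordinate of every image stays outside $\overline{\mathbb{D}_2}$, Montel's theorem applied to each coordinate gives normality exactly as in the proof of Proposition \ref{PROP_CONTAINMENTS}, which is how the paper concludes.
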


This proof is an adaptation of ideas that we learned when reading
\cite{Hu}:

\begin{proof}
Let $\mathbb{B}_{1/4}(p_0) \subset
\C^3$ denote the Euclidean Ball of radius $1/4$ centered at $p_0 = (-3,-3,-3)$. 
Since $p_0 \in S_0$ we have that
$\mathbb{B}_{1/4}(p_0) \cap S_0$ forms a non-empty open subset of
$S_0$.   We will show that for any
$p \in \mathbb{B}_{1/4}(p_0)$, any integer $k \geq 1$, and any reduced word $w_k w_{k-1} \ldots w_1$ in
the mappings $s_x, s_y$, and $s_z$ that each
coordinate of $w_k w_{k-1} \ldots w_1(p)$ has modulus at least as large as the
corresponding coordinate for $w_{k-1} \ldots w_1(p)$.

In particular, this will 
imply that 
\begin{align*}
w_k w_{k-1} \ldots w_1(p) \in \Big(\mathbb{C} \setminus \overline{\mathbb{D}_{2}}\Big)^3
\end{align*}
for any such word of any length $k \geq 1$.  Applying Montel's Theorem to each
of the three coordinates separately, this will prove that $\AUTOGROUP^\pm$ (and hence
$\AUTOGROUP < \AUTOGROUP^*$) forms a normal family on $\mathbb{B}_{1/4}(p_0)
\cap S_0$ and, in particular, that $\mathbb{B}_{1/4}(p_0) \cap S_0$ lies in $\mathcal{F}_0$.

We first check that our claim holds for $k = 1$ and, without loss
of generality, we can suppose $w_1 = s_x$.  Since $s_x(x,y,z) = (-x-yz,y,z)$
it suffices to check that the $x$-coordinate is not decreased in modulus:
\begin{align*}
|-x-yz| \geq |yz| - |z| = \left(3-\frac{1}{4}\right)^2 - \left(3+\frac{1}{4}\right) = \frac{69}{16} > 3+\frac{1}{4} > |x|.
\end{align*}
Now suppose that there is some integer $j \geq 1$ and some word $w_{j+1} w_j w_{j-1} \ldots w_1$ so that 
for some $p \in \mathbb{B}_{1/4}(p_0)$ one of the coordinates of $w_{j+1} w_j w_{j-1} \ldots w_1(p)$
has modulus less than the corresponding coordinate of $w_j w_{j-1} \ldots w_1(p)$.  Without loss
of generality we can assume that $j$ is the minimal such integer.
This implies that each coordinate of $w_j w_{j-1} \ldots w_1(p)$
has modulus at least as large as the corresponding coordinate of $p$ (hence modulus at least
$11/4 = 3-1/4$).   Let:
\begin{align*}
(x,y,z) = w_{j-1} \ldots w_{1}(p), \qquad (x',y',z') = w_j \ldots w_{1}(p), \quad \mbox{and} \quad (x'',y'',z'') = w_{j+1} \ldots w_{1}(p).
\end{align*}   
As explained above, $|x'|, |y'|, |z'| > 11/4$.

Without loss of generality we can assume $w_j = s_x$ and $w_{j+1} = s_y$ so that
\begin{align*}
(x',y',z') = (-x-yz,y,z) \qquad \mbox{and} \qquad (x'',y'',z'') = (x',-y'-x'z',z').
\end{align*}
Our assumption that $j$ was minimal implies that $|x| \leq |x'|$ 
and that $|y'| > |y''|$.   The first of these inequalities gives us (\ref{FATOU1}), below,
and the second of these inequalities gives us (\ref{FATOU2}), also below.
\begin{align}
2 |x'| &\geq |x'+x| = |-yz| = |y'z'|, \quad \mbox{and}  \label{FATOU1}\\
2 |y'| &> |y'+y''| = |-x'z'| = |x'z'|.   \label{FATOU2}
\end{align}
Suppose first that $|x'| < |y'|$.   In this case (\ref{FATOU1}) implies
\begin{align*}
|y'z'| \leq 2|x'| < 2|y'|.
\end{align*}
Since $|y'| \neq 0$ this gives $|z'| < 2$ which contradicts $|z'| > 11/4$.

Now suppose that $|x'| \geq |y'|$.   In this case (\ref{FATOU2}) implies
\begin{align*}
|x'z'| < 2|y'| \leq 2|x'|.
\end{align*}
Since $|x'| \neq 0$ this again gives $|z'| < 2$ which contradicts $|z'| > 11/4$.
\end{proof}

\vspace{0.1in}
\noindent
{\bf Acknowledgments:}
We thank Eric Bedford, Philip Boalch, Serge Cantat, Jeffrey Diller, Romain
Dujardin, Simion Filip, Yan Mary He, John Hubbard, Bernard Julia, Dan Margalit, Mikhail
Lyubich, Bogdan Nica, Emmanuel Paul, Jean-Pierre Ramis, and Ser Peow Tan for interesting
comments and discussions that have helped us to shape the questions posed in
this work.  We also thank the anonymous referee for his or her detailed comments which
helped us to substantially improve the exposition.  This work was supported by the US National Science Foundation
through grant DMS-2154414 and by the Centre International de Math\'ematiques et
Informatique CIMI through the project ``Complex dynamics of group actions,
Halphen and Painlev\'e systems''.


\vspace{0.2in}

{\footnotesize
\noindent
Julio Rebelo \\
Institut de Math\'ematiques de Toulouse; UMR 5219, \\
Universit\'e de Toulouse, \\
 118 Route de Narbonne,\\
F-31062 Toulouse, France.\\
rebelo@math.univ-toulouse.fr.

\vspace{0.1in}

\noindent
Roland Roeder\\
 Department of Mathematical Sciences\\
Indiana University--Purdue University Indianapolis,\\
Indianapolis, IN, United States.\\
roederr@iupui.edu
}

\end{document}